\newcommand{\be} {\begin{eqnarray}}
\newcommand{\ee} {\end{eqnarray}}
\newcommand{\bep} {\begin{eqnarray*}}
\newcommand{\eep} {\end{eqnarray*}}
\newcommand {\inte}{\mathop{\rm Int}\nolimits}
\newcommand {\Hol}{\mathop{\rm Hol}\nolimits}
\newcommand {\Id}{\mathop{\rm Id}\nolimits}
\renewcommand {\Re}{\mathop{\rm Re}\nolimits}
\newcommand {\Ff}{\mathcal{F}}
\newcommand {\A}{\mathcal{A}}
\newcommand {\BB}{\mathcal{B}}
\newcommand{\R}{{\mathbb R}}
\newcommand{\N}{{\mathbb N}}
\newcommand{\C}{{\mathbb C}}
\newcommand {\D}{\mathbb{D}}
\newtheorem{remar}{Remark}[section]
\newtheorem{examp}{Example}[section]
\newtheorem{defin}{Definition}[section]
\newtheorem{corol}{Corollary}[section]
\newtheorem{propo}{Proposition}[section]
\newtheorem{theorem}{Theorem}[section]
\newtheorem{lemma}{Lemma}[section]
\newtheorem{remark}{Remark}[section]
\newcommand{\rema}{\begin{remar}\rm}
\newcommand{\erema}{$\blacktriangleright$\end{remar}}
\newcommand{\exa}{\begin{examp}\rm}
\newcommand{\eexa}{$\blacktriangleright$\end{examp}}
\def\lwvec(#1 #2){\linewd 0.1
           \lvec(#1 #2)
           \linewd 0.05}
\begin{document}

\title[Families of inverse functions]{
Families of inverse functions: coefficient bodies and the Fekete--Szeg\"{o} problem
}

\author[M. Elin]{Mark Elin}

\address{Department of Mathematics,
         Ort Braude College,
         Karmiel 21982,
         Israel}

\email{mark$\_$elin@braude.ac.il}

\author[F. Jacobzon]{Fiana Jacobzon}

\address{Department of Mathematics,
         Ort Braude College,
         Karmiel 21982,
         Israel}

\email{fiana@braude.ac.il}

\keywords{inverse functions, Fekete--Szeg\"{o} functionals, Schur parameters, Bell polynomials}

\begin{abstract}
In this paper we establish the coefficient bodies for a wide class of families of inverse functions. We also completely describe those functions  that provide boundary points of that bodies in small dimensions. As an application we get sharp bounds for Fekete--Szeg\"{o} functionals over some classes of functions defined by quasi-subordination as well as over classes of their inverses. As a biproduct we derive a formula for ordinary Bell polynomials that seems to be new.


\end{abstract}
\maketitle
\section{Introduction}\label{sect-intro}

Estimation of Taylor coefficients for different classes of analytic functions and rigidity problems connected with such estimates are classical problems in Geometric Function Theory. Indeed, if  $\omega,\ \omega(z)=\sum\limits_{k=1}^{\infty}c_kz^k,$ is a holomorphic self-mapping of the open unit disk, then the famous Schwarz Lemma asserts that $|c_1| \leq 1$ and equality is attained only for rotations $\omega(z)=e^{i\theta}z$.
Further, it can be seen that $|c_2| \le 1-|c_1|^2$ and equality is possible only when $\omega$ is the product of the identity mapping $\Id(z)=z$ and an automorphism~of~the~disk. So, besides the estimates, this gives us also special uniqueness (or rigidity) results.

It is well-known that extensions of the above inequalities can be written in a unified form by using the Schur parameters $(\gamma_1,\gamma_2,\ldots)$ of $\omega$. Namely, $|\gamma_j|\le1$ for all $j\ge1$ and if $|\gamma_k|=1$ for some $k$, then $\gamma_j=0$ for all $j>k$ and $\omega$ is a Blaschke product of order $k$; see \cite{Schur, Sim}.

Regarding more complicated situation, many mathematicians studied bounds on coefficients (and coefficient functionals) for various classes of implicit functions, in particular, of inverse functions. Although the coefficient problem for the classes of inverse for all univalent functions and of inverse for starlike functions had been 
solved by Loewner in \cite{Loew} almost one hundred ears ago, the results for other classes are incomplete and attract researchers till nowadays. Among others, it is worth to mention the sharp estimates for early coefficients for inverses of convex functions established in \cite{L-Z} and 
for inverses of starlike functions of positive order obtained in \cite{Ka-Mi}.

Concerning (non-linear) coefficient functionals, the Fekete--Szeg\"{o} functionals are of special interest. They are named so after the seminal work \cite{F-S},  found numerous applications in geometric function theory and were studied by many mathematicians (see, for example, \cite{Ke-Me, Lec, Ma-Mi, Peng}, for general and unified approaches see \cite{ChKSug, Kanas}).
Given a function $f(z)=\sum\limits_{k=0}^{\infty}f_kz^k$ and a number $\lambda \in \C$, consider quadratic functionals of the form
\[
\Phi_n(f, \lambda):=f_{n}f_{n+2}-\lambda f_{n+1}^2, \quad n=0,1,2,\ldots .
\]
The Fekete--Szeg\"{o} problem for some class of analytic functions is to find sharp estimates for the functionals $\Phi_n(\cdot, \lambda)$ over this class.

In particular, $\Phi_n(f):=\Phi_n(f, 1)=\left|\begin{matrix}
                                      f_n & f_{n+1} \\
                                      f_{n+1} & f_{n+2}
                                    \end{matrix}\right| $ is the Hankel determinant of second order. The study of Hankel determinants was initiated by Hayman \cite{Hay} and Pommerenke \cite{Pom66}.
\vspace{3mm}

In this paper we study functions that are not necessarily univalent in the unit disk but are conformal at zero. Section~\ref{sect-Pre} is devoted to needed notations. In particular, we introduce the classes $\A_{\varphi,\psi}$ and $\BB_{\varphi,\psi}$ of two different types: the first one is defined by quasi-subordination in the sense of Robertson \cite{Robertson}, while the second consists of the inverse functions for the elements of the first one. They cover a wide spectrum of various classes.  Also we recall Schur parameters that serve  tools in our study.

In Section~\ref{sect-bell} we consider the Bell polynomials in order to establish relations between Fekete--Szeg\"{o} functionals on analytic functions and their inverses.

 In Section~\ref{sect-main} we establish  recursive formulae for Taylor's coefficients of functions from  $\A_{\varphi,\psi}$ and $\BB_{\varphi,\psi}$ and describe their coefficient bodies. For this aim we follow the line suggested in \cite{Sug} and tested in \cite{MF20}. Partially the results can also be obtained using the famous Lagrange inversion formula although in a more complicated way.

After that we present an algorithm that enables to estimate the Taylor coefficients for functions from  $\A_{\varphi,\psi}$ and $\BB_{\varphi,\psi}$ as well as to prove rigidity properties for these classes and realize it for $n=2$ and $n=3$.

In Section~\ref{sect-app}  we implement the results of the previous sections to solve the Fekete--Szeg\"{o}
problem over some specific $\A_{\varphi, \psi}$ and $\BB_{\varphi, \psi}$. We do not intend to generalize all known results but to demonstrate how our approach works.
For this aim we first choose $\A_{\varphi, \psi}$ to be the class of such functions $F$ those divided by some starlike (or convex) function are of positive real part.
 As a result we provide natural generalizations of some previous results on Fekete--Szeg\"o problem over close-to-convex functions; see, for example, \cite{Ke-Me, K-L-S, Peng}.
Further, we consider the class of functions that (up to shift) subordinate any given function $\psi$.

\medspace

\section{Notions and notations}\label{sect-Pre}
\setcounter{equation}{0}

Let $\D$ be the open unit disk in the complex plane $\C$. We denote the set of holomorphic functions on $\D$ with values in another domain $D\subseteq\C$ by $\Hol(\D,D)$, and by $\Hol(D) := \Hol(D,D)$, the set of all holomorphic self-mappings of $D$. In what follows we use the notion $D_r(c)$ for the open disk of radius $r$ centered at $c \in \C$ and denote $D_r:=D_r(0)$ so that $\D=D_1$.
Denote by $\Omega$ the subclass of $\Hol(\D)$ consisting of functions vanishing at the origin:
\begin{equation}\label{def-U}
\Omega=\{ \omega \in \Hol(\D):\ \omega(0)=0 \}.
\end{equation}

Given $\varphi, \psi \in \Hol(\D,\C)$, consider the class of holomorphic functions
\begin{equation}\label{classA}
\A_{\varphi,\psi}:=\{F \in \Hol(\D,\C): \frac{F}{\varphi}\prec\psi\}.
\end{equation}
In another words, $F \in \A_{\varphi,\psi}$ if there exists a function $\omega \in \Omega$ such that
\begin{equation}\label{subord}
F(z)=\varphi(z)\psi(\omega(z))\quad \text{ for all } z\in \D.
\end{equation}
Recall that the inclusion $F\in\cup_{\varphi\in\Omega}\A_{\varphi,\psi}$ means that $F$ is quasi-subordinate $\psi$ as it was defined by Robertson \cite{Robertson}, $F\prec_q\psi$.
Suppose that  $\varphi, \psi \in \Hol(\D,\C)$ satisfy conditions
\begin{equation}\label{cond}
\varphi(0)=0, \,\, \varphi'(0)\neq 0 \text{ and } \psi(0)\neq 0.
\end{equation}
Then for each $F \in \A_{\varphi,\psi}$ we have $F(0)=0$ and $F'(0)\not=0$, hence $F$ is locally univalent at the origin, so it is invertible and the inverse function $F^{-1}$ satisfies $F^{-1}(0)=0$. Moreover, it can be shown by using the classical Lagrange inversion formula that there is a disk of some radius $r>0$ around zero such that for any $F\in\A_{\varphi,\psi}$ the inverse function $F^{-1}$ is holomorphic in that disk (that is, $r$ depends only on $\varphi$ and $\psi$). This enables us to define  the class
\begin{equation}\label{classB}
\BB_{\varphi,\psi}:=\{F^{-1} : F\in \A_{\varphi,\psi}\}\subset \Hol(D_r,\C).
\end{equation}

Throughout the paper we assume that the  given functions $\varphi, \psi \in \Hol(\D,\C)$ satisfy conditions \eqref{cond}, hence the classes  $\BB_{\varphi,\psi}\subset\Hol(D_r,\C)$ are well-defined.

We now recall the definition of the Schur parameters for a holomorphic self-mapping of the unit~disk.
\begin{defin}[\cite{Schur}, see also \cite{Sim}]\label{Sh-Vec}
Let $\omega \in \Hol(\D)$ be not a constant. Denote by $\sigma$ the mapping acting on  $\Hol(\D)$ and defined by
 \begin{equation}\label{ShurP}
 (\sigma \omega)( z)= \displaystyle\frac{1}{z}\cdot\frac{\omega(z)-\omega(0)}{1-\overline{\omega(0)}\omega(z)}.
 \end{equation}
Consider the sequence $\omega_n=\sigma^{n}\omega$.  The numbers $\gamma_n = \omega_n(0)$, $n=0,1,2,3,\ldots,$  are called the Schur parameters of $\omega.$
\end{defin}

Let now $\omega \in \Omega$ and has  the Taylor expansion $\omega(z) =  c_1z + c_2z^2 + \ldots$,
 and let $\gamma_0, \gamma_1, \ldots$ be  its Schur parameters. Then by Lemma 2.2 in \cite{Sug}, $\gamma_0=c_0=0$ and
\begin{equation}\label{gamma-to-c}
(c_1,\ldots,c_{n})=\overrightarrow{F_n}(\gamma_1,\ldots,\gamma_{n}),
\end{equation}
where the non-analytic polynomial transformation $\overrightarrow{F_n}(\mathbf{z})$   of~$\C^n$ has coordinates defined by the recursive formulas (see \cite{Schur}),
\begin{eqnarray}\label{F}
&& F_1(z_1) =z_1 , \nonumber\\
&& F_m( z_1, z_2,\ldots,z_m) = (1-|z_1|^2)F_{m-1}( z_2, \ldots,z_m)\\
&&\qquad -\overline{z_1}\sum_{k=2}^{m-1}F_{m-k}(z_2, \ldots,z_{m-k+1})F_k( z_1, \ldots,z_k), \quad   m=2,\ldots,n.\nonumber
\end{eqnarray}

In particular, we have
\begin{equation}\label{w-schur}
\omega(0)=0=\gamma_0, \quad  \omega'(0)=\gamma_1\ \text{ and }\  \omega''(0)= 2\gamma_2(1-|\gamma_1|^2).
\end{equation}

Since every function in $\Hol(\D,\C)$ can be identified with the sequence of its Taylor coefficients, the following notations are relevant. Let $\Ff$ be a subclass of $\Hol(\D,\C)$ (or, more generally, holomorphic around zero), denote by $X_n(\Ff)$ the coefficient body of order $n$ for $\Ff$,  that is,
\begin{equation}\label{Coeff-body}
X_n(\Ff)=\left\{(a_0,a_1\ldots,a_n):\ \exists f \in\Ff, \   f(z)=\sum_{k=0}^{n}a_kz^k+o(z^n) \right\}.
\end{equation}
Further, for $j=1,2$ and $n\ge2$, we also use the notion
\begin{equation}\label{*Coeff-body}
X^{(j)}_n(\Ff)=\left\{(a_j,\ldots,a_n):\ \exists f \in\Ff, \   f(z)=\sum_{k=0}^{n}a_kz^k+o(z^n) \right\}.
\end{equation}

\medskip
\section{Bell polynomials}\label{sect-bell}
\setcounter{equation}{0}

We start this section with the following construction that is very familiar in combinatorics and has various applications (see, for example, \cite{C}).
\begin{defin}\label{Def-Bel}
Let $1\leq k\le n$. The ordinary Bell polynomials are given~by
\begin{equation}\label{Bell-ord}
B^o_{n,k}(x_{1},x_{2},\dots ,x_{n-k+1})=k!\sum_{j \in I_k}\prod_{i=1}^{n-k+1} \frac{x_i^{j_{i}} }{j_{i}!},
\end{equation}
where $I_k$ consists of all multi-indexes $j=(j_1,\ldots, j_{n-k+1})$ such that
\begin{equation}\label{I_k}
  j_1,\ldots, j_{n-k+1}\geq0, \quad \sum_{i=1}^{n - k+1} i j_i = n, \quad \sum_{i=1}^{n - k+1} j_i = k .
\end{equation}
\end{defin}
 For instance, setting $k=1,$  we see that $B_{n,1}^o(x) =x_n.$
Similarly, setting $k=n$,  we  get  $B_{n,n}^o(x) = x_1^n$.
The following interesting property of the Bell polynomials will be proved using our further results.
\begin{theorem}\label{th-Bell}
   Let  $\{c_n\}_{n=1}^{\infty}$ be a sequence of complex numbers. Then for any $w\in\C$ and $p\in \N$ we have
\begin{equation*}\label{comb1}
\sum_{k=1}^{p} w^{k-1} B_{p,k}^o\left(c_1,\ldots,c_{p-k+1}\right)=\det \left(\begin{matrix}
          c_1 &       -1 & 0 &  \ldots & 0    \\
          c_2 &    w c_1 & -1 &  \ldots & 0  \\
          c_3 &    w c_2 & w c_1 &  \ldots & 0  \\
           \vdots & \vdots & \vdots  &  {} & \vdots \\
          c_{p-1}&   w c_{p-2} & w c_{p-3} &  \ddots & -1  \\
          c_p&      w c_{p-1} &  w c_{p-2} & \ldots & w c_1
                      \end{matrix}\right).
\end{equation*}
\end{theorem}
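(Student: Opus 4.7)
The plan is to recognize both sides of the identity as the coefficient of $z^p$ in one and the same formal power series, namely
\[
H(z):=\frac{C(z)}{1-wC(z)}, \qquad C(z):=\sum_{n\ge 1}c_n z^n.
\]
Everything will be treated as a polynomial identity in the indeterminates $c_1,\ldots,c_p,w$; no analytic considerations are needed.

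For the left-hand side, I would first establish the standard generating-function identity for ordinary Bell polynomials:
\[
C(z)^k=\sum_{n\ge k} B^o_{n,k}(c_1,\ldots,c_{n-k+1})\,z^n.
\]
This follows directly from the multinomial expansion of $C(z)^k$ combined with the definition \eqref{Bell-ord} (the $k!$ in \eqref{Bell-ord} is exactly the multinomial numerator, and the index set $I_k$ encodes the restriction $\sum i j_i=n$, $\sum j_i=k$). Summing weighted by $w^{k-1}$ and using the geometric series,
\[
\sum_{k=1}^{\infty} w^{k-1} C(z)^k=\frac{C(z)}{1-wC(z)}=H(z),
\]
and extracting $[z^p]$ turns the outer sum into the finite sum $\sum_{k=1}^{p}w^{k-1}B^o_{p,k}(c_1,\ldots,c_{p-k+1})$.

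For the right-hand side, multiply through by $1-wC(z)$ to get $H(z)=C(z)+wC(z)H(z)$. Matching coefficients of $z^n$ for $n=1,\ldots,p$ yields the linear recursion
\[
h_n - w\sum_{k=1}^{n-1} c_k\, h_{n-k}=c_n,
\]
which is a lower-triangular system $L\vec h=\vec c$, with $L$ having $1$'s on the diagonal and $-wc_{i-j}$ strictly below. Since $\det L=1$, Cramer's rule gives $h_p$ as the determinant of $L$ with its last column replaced by $(c_1,\ldots,c_p)^{\mathrm T}$. To convert that to the matrix displayed in the theorem, I cyclically move the $\vec c$-column from position $p$ to position $1$ (a permutation of sign $(-1)^{p-1}$) and then negate the remaining $p-1$ columns (another factor $(-1)^{p-1}$). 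These two signs cancel, the former diagonal of $1$'s becomes the superdiagonal of $-1$'s, and the former subdiagonals $-wc_{i-j}$ become $wc_{i-j+1}$, producing exactly the matrix in the statement.

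The only real work is the bookkeeping in the last paragraph: one must verify that the column permutation followed by the column negations sends the Cramer determinant to the Hessenberg determinant entry-by-entry, with signs tracked correctly. Apart from this indexing exercise, the argument is routine, and it makes no use of the rest of the paper. An alternative route would be to verify that $D_p:=\det(M_p)$ satisfies the recursion $D_p=c_p+w\sum_{k=1}^{p-1}c_k D_{p-k}$ (with $D_0:=1$) by expanding along the last column, and then conclude $D_p=h_p$ by induction; I would prefer the Cramer route because it exhibits the combinatorial meaning of the determinant rather than verifying it after the fact.
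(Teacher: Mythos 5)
Your proof is correct and follows essentially the same route as the paper's: both sides are identified as the coefficient of $z^p$ in $C(z)/(1-wC(z))$, computed once via the Bell-polynomial expansion of the geometric series and once via Cramer's rule on the lower-triangular convolution system, and then matched. The differences are only in packaging, and slightly in your favour: you argue with formal power series throughout, which removes the paper's detour through the class $\A_{\varphi,\psi}$ with $\varphi(z)=z$, $\psi(z)=1/(1-wz)$ (and hence its temporary restrictions $\sum_{n}|c_n|<1$, $|w|\le 1$ and the closing appeal to the uniqueness theorem), and you spell out the column permutation and sign cancellation identifying the Cramer determinant with the displayed Hessenberg determinant --- a step the paper compresses into ``comparison proves our assertion.''
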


The polynomial
\begin{equation}\label{bell-new}
   B_p^o(x_1,\ldots,x_p):=\sum\limits_{k=1}^p B_{p,k}^o\left(x_1,\ldots,x_{p-k+1}\right)
\end{equation}
can be naturally named the complete (ordinary) Bell polynomial. It follows from the proof presented above that $\omega\in\Omega$ if and only if the sequence $\left\{2B^o_p(c_1,\ldots,c_p)\right\}_{p\geq 1}$ consists of the coefficients of a function of the Carath\'{e}odory class. Hence, values of $B_p^o$ satisfy the relations proven in \cite{Sug}. Besides this, taking $w=1$ in Theorem~\ref{th-Bell}, one immediately obtains an explicit formula for $B_p^o(x_1,\ldots,x_n).$

One of the applications of Bell's polynomials is a generalization of the chain rule to higher-order derivatives known as the Fa\`{a}  di Bruno formula. We formulate it for holomorphic functions as it was observed in \cite{MF20}.

\begin{propo}\label{diBruno}
  Let functions $g\in\Hol(\D)$ and $h\in\Hol(\D,\C)$ have the Taylor expansions $g(z)=\sum\limits_{n=1}^\infty a_nz^n$ and $h(z)=\sum\limits_{n=0}^\infty b_nz^n$, respectively. Denote $h\circ g (z)=\sum\limits_{n=0}^\infty c_nz^n$. Then $c_0=h(0)$ and
\begin{equation}\label{ChR1}
c_n = \sum_{k=1}^n b_k B_{n,k}^o \left(a_1,a_2, \ldots, a_{n-k+1}\right)
\end{equation}
for  $n=1,2,3,\ldots$.
\end{propo}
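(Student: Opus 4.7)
The approach is by direct substitution and the multinomial theorem. Since $g(0)=0$, the composition $h\circ g$ is well-defined in a neighborhood of the origin and
\[
h(g(z))=b_0+\sum_{k=1}^{\infty}b_k\, g(z)^k.
\]
This immediately yields $c_0=b_0=h(0)$, and the task reduces to computing $[z^n]g(z)^k$ for each $k\geq1$ and $n\geq1$.

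Applying the multinomial theorem to $g(z)^k=\bigl(\sum_{i\ge1}a_iz^i\bigr)^k$ and grouping by the power of $z$, one gets
\[
g(z)^k=\sum_{\substack{j_1,j_2,\ldots\ge0\\ \sum_i j_i=k}}\frac{k!}{\prod_i j_i!}\prod_i a_i^{j_i}\,z^{\sum_i ij_i}.
\]
Extracting the coefficient of $z^n$ imposes the second constraint $\sum_i i j_i=n$; these are exactly the multi-indices in the set $I_k$ from \eqref{I_k}. Moreover, any $j_i$ with $i>n-k+1$ must vanish, because otherwise $\sum_i i j_i\geq (n-k+2)+(k-1)>n$. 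Comparing with the definition \eqref{Bell-ord} shows that
\[
[z^n]g(z)^k=B^o_{n,k}(a_1,\ldots,a_{n-k+1}).
\]

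Finally, I would observe that for $k>n$ the set $I_k$ is empty, since $\sum_i i j_i\geq\sum_i j_i=k>n$, so $B^o_{n,k}\equiv0$ in that range. Therefore, collecting the contributions at $z^n$ from all $k\geq1$ yields
\[
c_n=\sum_{k=1}^{n}b_k\,B^o_{n,k}(a_1,\ldots,a_{n-k+1}),
\]
which is \eqref{ChR1}. The only technical point is the interchange of summation in the double series, but this is routine: both $h$ and $g$ are analytic near $0$ with $g(0)=0$, so the double series converges absolutely near the origin, and rearrangement is justified. I expect no real obstacle; the proof is essentially the standard combinatorial derivation of Fa\`a di Bruno's formula in the ordinary Bell polynomial version, requiring only careful bookkeeping of the multinomial expansion against the definition of $B^o_{n,k}$.
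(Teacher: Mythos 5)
Your argument is correct and complete. Note, however, that the paper itself gives no proof of Proposition~\ref{diBruno}: it is presented as the known Fa\`a di Bruno formula, with a pointer to \cite{MF20} (and the combinatorial background in \cite{C}). What you have supplied is the standard self-contained derivation: writing $h(g(z))=b_0+\sum_{k\ge1}b_k g(z)^k$ (legitimate since $g(0)=0$ and $|g|<1$ on $\D$), extracting $[z^n]g(z)^k$ by the multinomial theorem, and matching the resulting constrained sum over multi-indices with the definition \eqref{Bell-ord}--\eqref{I_k} of $B^o_{n,k}$. The two small bookkeeping points you address — that $j_i=0$ is forced for $i>n-k+1$ (so the polynomial genuinely depends only on $a_1,\ldots,a_{n-k+1}$) and that terms with $k>n$ contribute nothing because $g(z)^k=O(z^k)$ — are exactly the places where a careless write-up would be incomplete, and you handle both correctly. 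The rearrangement of the double series is indeed routine by absolute convergence near the origin. So your proof fills a gap the paper leaves to the references, at the modest cost of a page of standard combinatorics; the paper's choice buys brevity by outsourcing the verification.
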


Let $F$ be holomorphic in a neighborhood of zero such that $F(0)=0$ and $F'(0) \neq 0$. Then $G=F^{-1}$ is well defined near zero. Writing
\[
F(z)=\sum_{k=1}^{\infty}a_kz^k\quad  \text{and}\quad  G(z)=\sum_{k=1}^{\infty}b_kz^k,
\]
we have $b_1= \displaystyle \frac{1}{a_1}=:b$.  It turns out that other Taylor coefficients of $G$ can be expressed as follows.

\begin{lemma}\label{lem-main}
Using the above notations we have
\begin{equation}\label{bn}
 b_n=-b^n\sum_{k=1}^{n-1}b_kB_{n,k}^o(a_1,\ldots,a_{n-k+1}),\quad n\ge2.
\end{equation}
Consequently, $\Phi_1(G,\lambda)=-b^6\Phi_1(F,2-\lambda)$ and
$$
\Phi_2(G,\lambda)=b^{8}\Phi_2(F,\lambda)+(4\lambda-5)b^{10}a_2^2\Phi_1(F).
$$
\end{lemma}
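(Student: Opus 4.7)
The plan is to derive (\ref{bn}) from Faà di Bruno's formula applied to the identity $G \circ F(z) = z$, and then to obtain the two Fekete--Szegő identities by direct substitution of the first few $b_n$ into the definition of $\Phi_n$.

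First, since $G = F^{-1}$ locally, $G \circ F(z) = z$, meaning the composite has Taylor coefficients $c_1 = 1$ and $c_n = 0$ for $n \ge 2$. Applying Proposition~\ref{diBruno} with $g = F$ and $h = G$ therefore yields
\[
0 = \sum_{k=1}^{n} b_k B^o_{n,k}(a_1, \ldots, a_{n-k+1}), \qquad n \ge 2.
\]
Isolating the $k = n$ summand, in which $B^o_{n,n}(a_1) = a_1^n$, and solving for $b_n$ using $a_1^n = b^{-n}$ gives (\ref{bn}) at once; the case $n = 1$ reads simply $b_1 = 1/a_1 = b$.

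Next I would use (\ref{bn}) with the standard small Bell polynomials $B^o_{2,1} = x_2$, $B^o_{3,1} = x_3$, $B^o_{3,2} = 2 x_1 x_2$, $B^o_{4,1} = x_4$, $B^o_{4,2} = 2 x_1 x_3 + x_2^2$, and $B^o_{4,3} = 3 x_1^2 x_2$ to compute
\[
b_2 = -b^3 a_2, \quad b_3 = -b^4 a_3 + 2 b^5 a_2^2, \quad b_4 = -b^5 a_4 + 5 b^6 a_2 a_3 - 5 b^7 a_2^3,
\]
at each stage using $a_1 b = 1$ to collapse excess factors of $a_1$. Substituting into $\Phi_1(G, \lambda) = b_1 b_3 - \lambda b_2^2$ and converting $b^5 a_3 = b^6 (a_1 a_3)$ delivers $-b^6\bigl[a_1 a_3 - (2-\lambda)a_2^2\bigr] = -b^6 \Phi_1(F, 2 - \lambda)$ directly.

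The main obstacle, such as it is, lies in the $\Phi_2$ identity: expanding $b_2 b_4 - \lambda b_3^2$ produces four monomial families $a_2 a_4$, $a_3^2$, $a_2^2 a_3$, $a_2^4$ with specific powers of $b$. The $a_2 a_4$ and $a_3^2$ contributions combine cleanly into $b^8 \Phi_2(F, \lambda)$, while the $a_2^2 a_3$ and $a_2^4$ terms both carry the factor $(4\lambda - 5)$ (with opposite signs). Converting $b^9 = b^{10} a_1$ once more, they regroup as $(4\lambda - 5) b^{10} a_2^2 (a_1 a_3 - a_2^2)$, and recognizing the bracket as $\Phi_1(F)$ yields the stated formula. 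The step requiring attention is precisely this final regrouping: one must track the powers of $b$ correctly and notice that the surviving coefficients pair up so as to reconstitute $\Phi_1(F)$.
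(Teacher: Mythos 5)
Your proposal is correct and follows the paper's own route exactly: apply the Fa\`a di Bruno formula (Proposition~\ref{diBruno}) to the identity $G\circ F(z)=z$, isolate the $k=n$ term $b_n a_1^n$ to obtain \eqref{bn}, and then derive the two Fekete--Szeg\"o identities by direct substitution (the paper dismisses this last step as ``straightforward calculations,'' which you carry out correctly, including the values of $b_2$, $b_3$, $b_4$ and the regrouping of the $a_2^2a_3$ and $a_2^4$ terms with the common factor $4\lambda-5$).
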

\begin{proof}
Since $\left(G\circ F\right)^{(n)}(z)=0$ \ for 
$n\ge2$, we have by Proposition~\ref{diBruno},
\begin{equation*}\label{diBru-cal-b}
0 = \sum_{k=1}^n b_k B_{n,k}^o \left(a_1,a_2, \ldots, a_{n-k+1}\right),\quad n\ge2.
\end{equation*}
Thus
\begin{equation*}\label{diBru-b}
b_na_1^n = -\sum_{k=1}^{n-1} b_k B_{n,k}^o \left(a_1,a_2, \ldots, a_{n-k+1}\right),
\end{equation*}
which gives us \eqref{bn}. In turn, \eqref{bn} implies the rest by straightforward calculations.
\end{proof}

\medskip

\section{Taylor coefficients and rigidity}\label{sect-main}
\setcounter{equation}{0}
Recall that under assumptions \eqref{cond},  for every $F \in \A_{\varphi,\psi}$ there is $G=F^{-1} \in \BB_{\varphi,\psi}\subset \Hol(\D_r,\C)$.

 \subsection{Coefficient body for $\BB_{\varphi,\psi}$}
First we express the Taylor coefficients of $F \in \A_{\varphi,\psi},\displaystyle \ F(z)=\sum_{k=0}^{\infty}a_kz^k$, by the Taylor coefficients of $\varphi$ and $\psi$.

\begin{theorem}\label{Th-main}
Assume that the functions $\varphi,\psi$ have the Taylor expansions $\varphi(z)=\sum\limits_{k=1}^{\infty}\alpha_kz^k$ and $\psi(z)=\sum\limits_{k=0}^{\infty}\beta_kz^k$. Let $F \in\A_{\varphi,\psi}$, that is, $F(z)=\varphi(z)\psi(\omega(z))$ for some $\omega \in \Hol(\D)$ with $\omega(z)=\sum\limits_{k=1}^{\infty}c_kz^k$. Denote $b:= \displaystyle \frac{1}{\alpha_1\beta_0}$. Then $a_1=\frac{1}{b}$ and
\begin{equation}\label{ap}
a_p=\alpha_p\beta_0+\sum_{m=1}^{p-1}\sum_{k=1}^{m}\alpha_{p-m}\beta_kB_{m,k}^o\left(c_1,\ldots,c_{m-k+1}\right),\quad p\ge2,
\end{equation}
where $B^o_{m,k}$ are the ordinary Bell polynomials defined by formula \eqref{Bell-ord}.
\end{theorem}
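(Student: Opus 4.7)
The plan is to apply Proposition~\ref{diBruno} to the composition $\psi\circ\omega$ and then extract the Taylor coefficients of the product $\varphi\cdot(\psi\circ\omega)$ via the Cauchy product. Since both tools are already at our disposal, the argument should reduce to careful bookkeeping.

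First I would apply Proposition~\ref{diBruno} with $g:=\omega$ (note $\omega(0)=0$, which is the hypothesis on $g$) and $h:=\psi$. Writing $\psi(\omega(z))=\sum_{m=0}^\infty d_m z^m$, the proposition yields $d_0=\psi(0)=\beta_0$ and, for $m\geq 1$,
$$d_m=\sum_{k=1}^m \beta_k\, B^o_{m,k}(c_1,\ldots,c_{m-k+1}).$$
Next, since $F(z)=\varphi(z)\,(\psi\circ\omega)(z)$ and $\varphi(z)=\sum_{j=1}^\infty\alpha_j z^j$ starts at $j=1$, the Cauchy product gives
$$a_p=\sum_{j=1}^{p}\alpha_j\, d_{p-j}=\sum_{m=0}^{p-1}\alpha_{p-m}\, d_m.$$
Separating the $m=0$ term, which contributes $\alpha_p d_0=\alpha_p\beta_0$, and substituting the above expression for $d_m$ into the remaining terms yields exactly formula \eqref{ap}. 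The case $p=1$ is consistent: the double sum is empty and one recovers $a_1=\alpha_1\beta_0=1/b$, matching the claim.

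The argument is essentially bookkeeping and I do not anticipate a serious obstacle. The only care points are to verify that the outer sum starts at $j=1$ (because $\varphi(0)=0$, so the $m=p$ term is absent, which is why the summation range is $m=1,\ldots,p-1$ after peeling off $m=0$) and that the Fa\`a di Bruno sum for $d_m$ begins at $k=1$ (which is granted by Proposition~\ref{diBruno} since $\omega(0)=0$ makes the $k=0$ contribution vanish). After these checks, the claimed identity follows immediately.
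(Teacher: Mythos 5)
Your proposal is correct and follows essentially the same route as the paper: apply Proposition~\ref{diBruno} to $\psi\circ\omega$ to get the coefficients as Bell polynomial sums, then combine with the coefficients of $\varphi$ via the product rule (the paper phrases this as the Leibniz formula for $\frac{d^p}{dz^p}(\varphi\cdot(\psi\circ\omega))$ at $z=0$, which is the same bookkeeping as your Cauchy product), using $\alpha_0=0$ to truncate the outer sum at $m=p-1$. No gaps.
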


\begin{proof}
Note that $a_1=F'(0)=\varphi'(0)\psi(0)=\alpha_1\beta_0$. Write the Taylor expansion of the function $\displaystyle\psi\circ \omega(z)=\sum_{k=0}^{\infty}\delta_nz^n$. By Proposition~\ref{diBruno}, $\delta_0=\beta_0$ \nolinebreak and
\begin{equation*}\label{diBruPsi}
\delta_m = \sum_{k=1}^m\beta_k B_{m,k}^o \left(c_1,c_2, \ldots, c_{m-k+1}\right).
\end{equation*}
Now the Leibnitz rule applied to the function $F=\varphi\cdot\left(\psi\circ\omega\right) $ gives
\begin{equation*}\label{leibniz-rule}
\frac{d^p}{dz^p} F(z)=\sum_{m=0}^{p}\binom{p}{m}\varphi ^{(p-m)}(z) \frac{d^m}{dz^m}\left(\psi\circ \omega(z) \right),
\end{equation*}
or equivalently,
\begin{equation*}\label{a_p}
a_p=\sum_{m=0}^{p}\alpha_{p-m}\delta_m=\alpha_p\beta_0+\sum_{m=1}^{p}\alpha_{p-m}\sum_{k=1}^m\beta_k B_{m,k}^o \left(c_1,c_2, \ldots, c_{m-k+1}\right).
\end{equation*}
Since $\alpha_0=0$,  formula \eqref{ap} holds for $p\ge2$.
\end{proof}

Now we are ready to prove Theorem~\ref{th-Bell}.
  \begin{proof}
We prove the theorem for the case $\sum\limits_{n=1}^{p} |c_n|<1$ and $|w|\le1$. Then the assertion in its generality will follow by the uniqueness theorem.

 Under the above restriction $\omega(z)=\sum\limits_{n=1}^{p} c_nz^n \in \Omega$.
The function ${F}(z)=\frac{z}{1-w\omega(z)}$ belongs to $\A_{\varphi,\psi}$ with $\varphi(z)=z,\ \psi(z)= \frac{1}{1-w z}\,$. It follows from Theorem~\ref{Th-main}  that  the Taylor coefficients of $F$ can be calculated by the formula
\begin{equation}\label{a_p-Bell}
a_{p+1}=\sum_{k=1}^{p}w^k B_{p,k}^o\left(c_1,\ldots,c_{p-k+1}\right).
\end{equation}

On the other hand, expanding both sides of the equality ${\frac{F(z)}{z}\left(1-w\omega(z)\right)=1}$ to the Taylor series, we get
\[
\left(1+ a_2z+a_3z^2+ \cdots  \right)   \left( 1-w c_1z-w c_2 z^2+\cdots \right) =1.
\]
Comparison of the the coefficients up to degree $p\in \N$ in the left and right-hand sides yields
\begin{equation}\label{Delta}
\left(\begin{matrix}
                       1 & 0 & 0 & \ldots & 0 \\
                      -w c_1 &1 & 0 &  \ldots & 0 \\
                      -w c_2& -w c_1 & 1 &  \ldots & 0 \\
                       \vdots & \vdots & \vdots  &  \ddots & \vdots \\
                       -w c_{p-1} &  -w c_{p-2} & -w c_{p-3}  & \ldots & 1
                      \end{matrix}\right) \left(\begin{matrix}
                                            a_2 \\
                                            a_3 \\
                                            a_4 \\
                                            \vdots \\
                                            a_{p+1}
                                          \end{matrix}\right)= w\left(\begin{matrix}
                                            c_1 \\
                                            c_2 \\
                                            c_3 \\
                                            \vdots \\
                                            c_{p}
                                          \end{matrix}\right),
\end{equation}
that is, $\{a_2,\ldots,a_{p+1}\}$ is the solution of this linear system.
Note that  the determinant of the matrix in the left-hand side of \eqref{Delta} equals $1$. Denote
\[
\Delta_p: =   \det \left(\begin{matrix}
                 1 & 0 & 0 &  \ldots & 0  &  c_1 \\
                -w c_1 & 1 & 0 &  \ldots & 0 & c_2 \\
                -w c_2 & -w c_1 & 1 &  \ldots & 0 & c_3 \\
                \vdots & \vdots & \vdots & \ddots  &  \vdots & \vdots \\
                 -w c_{p-2} &  -w c_{p-3} & -wc_{p-4} &  \ldots & 1 & c_{p-1} \\
                 -w c_{p-1} &  -w c_{p-2} &   -w c_{p-3}  & \ldots & -w c_1 & c_p
                      \end{matrix}\right).
\]
Then by Cramer's rule we have $a_{p+1}=w\Delta_{p}$. Comparison with \eqref{a_p-Bell} proves our  assertion.
\end{proof}

Theorem~\ref{Th-main} together with Lemma~\ref{lem-main} allow to describe the coefficient bodies of $\A_{\varphi,\psi}$ and $\BB_{\varphi,\psi}$ using an approach similar to that in \cite{Sug}, see also \cite{MF20}. In the next theorem we construct a continuous mapping of  $\overline{\D^{n-1}}$ onto $X^{(2)}_{n}( \BB_{\varphi,\psi})$.
\begin{theorem}\label{Th-main-b}
There exists a transformation $\overrightarrow{\Phi}_n: \D^{n-1} \mapsto X^{(2)}_n( \BB_{\varphi,\psi} )$, of the form $\overrightarrow{\Phi}_n(\mathbf{z})=\overrightarrow{K}_n \circ \overrightarrow{F}_{n-1}({\bf z})$, where $\overrightarrow{K}_n$ is a polynomial transformation of $\C^{n-1}$ and $\overrightarrow{F}_{n-1}$ is defined by~\eqref{F}, such that
\begin{itemize}
  \item [(i)]  $\overrightarrow{\Phi}_n$ is a continuous mapping of  $\overline{\D^{n-1}}$ onto $X^{(2)}_{n}( \BB_{\varphi,\psi})$
  \item [(ii)]  If, in addition, $\psi'(0)\neq 0$, then $\overrightarrow{K}_n$ is an automorphism of $\C^{n-1}$. In this case,  $\overrightarrow{\Phi_n}(\D^{n-1})= \inte X^{(2)}_{n}( \BB_{\varphi,\psi})$,  $\overrightarrow{\Phi}_n (\partial (\D^n))= \partial X^{(2)}_{n}( \BB_{\varphi,\psi})$, and, $\overrightarrow{\Phi}_n$   is a real analytic diffeomorphism on $\D^{n-1}$ but  is not injective on the boundary  of $\D^{n-1}$.
\end{itemize}
\end{theorem}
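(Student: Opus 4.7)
The plan is to build $\overrightarrow{\Phi}_n$ as a composition of three successive maps. First, the Schur-parameter map $\overrightarrow{F}_{n-1}$ of~\eqref{F} sends $(\gamma_1,\ldots,\gamma_{n-1})\in\overline{\D^{n-1}}$ to the Taylor coefficients $(c_1,\ldots,c_{n-1})$ of some $\omega\in\Omega$. Second, formula~\eqref{ap} in Theorem~\ref{Th-main} provides a polynomial map sending $(c_1,\ldots,c_{n-1})$ to $(a_2,\ldots,a_n)$, the Taylor coefficients of the corresponding $F=\varphi\cdot(\psi\circ\omega)\in\A_{\varphi,\psi}$ (recall that $a_1=\alpha_1\beta_0$ is a fixed constant). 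Third, the recursion~\eqref{bn} of Lemma~\ref{lem-main} defines inductively a polynomial map taking $(a_2,\ldots,a_n)$ to the coefficients $(b_2,\ldots,b_n)$ of $G=F^{-1}\in\BB_{\varphi,\psi}$. Taking $\overrightarrow{K}_n$ to be the composition of the latter two polynomial maps yields the required factorization $\overrightarrow{\Phi}_n=\overrightarrow{K}_n\circ\overrightarrow{F}_{n-1}$.

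For item~(i), continuity of $\overrightarrow{\Phi}_n$ on $\overline{\D^{n-1}}$ is immediate, since $\overrightarrow{F}_{n-1}$ extends continuously to the closed polydisk and $\overrightarrow{K}_n$ is polynomial. Surjectivity onto $X^{(2)}_n(\BB_{\varphi,\psi})$ follows by reversing the construction: every $G\in\BB_{\varphi,\psi}$ equals $F^{-1}$ for some $F\in\A_{\varphi,\psi}$, every such $F$ has the form $\varphi\cdot(\psi\circ\omega)$ with $\omega\in\Omega$, and Schur's theorem ensures that $\overrightarrow{F}_{n-1}$ maps $\overline{\D^{n-1}}$ onto $X^{(1)}_{n-1}(\Omega)$.

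For item~(ii), the crux is to show that when $\psi'(0)=\beta_1\neq 0$ the two polynomial constituents of $\overrightarrow{K}_n$ are each polynomial automorphisms of $\C^{n-1}$; this follows by a triangularity argument. Inspecting~\eqref{ap}, the only contribution to $a_p$ involving $c_{p-1}$ comes from the index choice $m=p-1$, $k=1$, and equals $\alpha_1\beta_1\,c_{p-1}$; all other terms depend only on $c_1,\ldots,c_{p-2}$. Since $\alpha_1\beta_1\neq 0$, the map $(c_1,\ldots,c_{n-1})\mapsto(a_2,\ldots,a_n)$ is lower-triangular with non-vanishing diagonal, hence a polynomial automorphism. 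A parallel inspection of~\eqref{bn} shows that $b_p$ contains $a_p$ linearly with coefficient $-b^{p+1}$ (coming from $k=1$, where $B^o_{p,1}(a_1,\ldots,a_p)=a_p$), and otherwise involves only $a_2,\ldots,a_{p-1}$; so the second map is likewise a lower-triangular polynomial automorphism. Thus $\overrightarrow{K}_n$ is itself a polynomial automorphism, in particular a real-analytic bijection of $\C^{n-1}$ onto itself.

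The remaining claims of~(ii) are then inherited from the well-known properties of the Schur map $\overrightarrow{F}_{n-1}$: it is a real-analytic diffeomorphism of $\D^{n-1}$ onto $\inte X^{(1)}_{n-1}(\Omega)$, sends $\partial(\D^{n-1})$ onto the boundary of that body, and fails injectivity there because a condition $|\gamma_k|=1$ with $k<n-1$ forces $\omega$ to be a Blaschke product of order~$k$ and renders the subsequent Schur parameters irrelevant. Since $\overrightarrow{K}_n$ is a polynomial automorphism, it carries interior to interior and boundary to boundary, and these structural features persist for $\overrightarrow{\Phi}_n$. The main obstacle I anticipate is the triangularity bookkeeping above; once the non-vanishing diagonal entries are identified in both polynomial maps, the remainder reduces to transferring established properties of $\overrightarrow{F}_{n-1}$ through the polynomial automorphism $\overrightarrow{K}_n$.
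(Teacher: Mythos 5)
Your proposal is correct and follows essentially the same route as the paper: the same factorization through the coefficient map $(c_1,\ldots,c_{n-1})\mapsto(a_2,\ldots,a_n)$ from Theorem~\ref{Th-main} and the map $(a_2,\ldots,a_n)\mapsto(b_2,\ldots,b_n)$ from Lemma~\ref{lem-main}, the same triangularity observation that $a_p-\alpha_1\beta_1c_{p-1}$ depends only on $c_1,\ldots,c_{p-2}$ (and likewise $b_p+b^{p+1}a_p$ on lower-index data), and the same transfer of the known boundary/interior behavior of the Schur map $\overrightarrow{F}_{n-1}$ through the resulting polynomial automorphism.
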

\begin{proof}
For any $n\geq 2$ consider the polynomial transformation  of $\C^{n-1}$ of the form
$
\overrightarrow{S_n}(\mathbf{z})\left(= \overrightarrow{S_n}(z_1\ldots,z_{n-1})\right):= \left(S_2(z_1),\ldots,S_n(z_1,\ldots,z_{n-1}) \right)
$
defined by $S_p(z_1,\ldots,z_{p-1}):=\alpha_p\beta_0+\sum\limits_{m=1}^{p-1}\sum\limits_{k=1}^{m}\alpha_{p-m}\beta_kB_{m,k}^0\left(z_1, \ldots,z_{m-k+1}\right).$ Then Taylor's coefficients of an arbitrary $F \in \A_{\varphi,\psi}$ with $\omega(z)=\sum\limits_{k=1}^{\infty}c_kz^k$ are $a_1=\frac{1}{b}$ and $a_p=S_p(c_1,\ldots,c_{p-1}),$ $p\ge2$ by Theorem~\ref{Th-main}.

We claim that the transformation $ \overrightarrow{S_n}$ maps the coefficient body $X^{(1)}_{n-1}(\Omega)$ onto $X^{(2)}_{n}(\A_{\varphi,\psi})$.
Moreover,  it is an automorphism of~$\C^{n-1}$ whenever $\psi'(0) \neq 0$.

Indeed, take any $\omega=\sum\limits_{k=1}^{\infty}c_kz^k \in \Omega$. The vector $\overrightarrow{S_n}(c_1,c_2,\ldots,c_{n-1})=:\mathbf{a}=(a_2,\ldots,a_n)$ consists of Taylor coefficients of some $F \in \A_{\varphi,\psi}$; hence $\mathbf{a}\in  X^{(1)}_{n-1}(\A_{\varphi,\psi}) $. Thus $ \overrightarrow{S_n}$ maps $X^{(1)}_{n-1}(\Omega)$ into $X^{(2)}_{n}(\A_{\varphi,\psi})$.
Now, take $\mathbf{a}\in X^{(2)}_{n}(\A_{\varphi,\psi})$. There is a function $F,\ F(z)=bz+a_2z^2+\ldots+a_{n+1}z^{n+1}+o(z^{n+1}),$ which is an element of $\A_{\varphi,\psi}$, that is, $F=\varphi\cdot\psi\circ\omega$  for some $\omega \in \Omega$. Denoting the Taylor coefficients of $\omega$
by $(c_1,c_2,\ldots)$ we see that $\overrightarrow{S_n}(c_1,c_2,\ldots,c_{n-1})=\mathbf{a}.$
Note that $S_p(z_1,\ldots,z_{p-1})- \alpha_1\beta_1z_{p-1}$ depends on $(z_1,\ldots,z_{p-2})$ only. Therefore, since $\alpha_1 \neq 0$ one concludes that if $\beta_1\neq 0$ then $ \overrightarrow{S_n}$ is invertible.

Similarly to the above, consider the polynomial  transformation $\overrightarrow{L_n}$ of $\C^{n-1}$ of the form  $\overrightarrow{L_n}(\mathbf{z}):= \left( L_1(z_1), L_2(z_1,z_2),\ldots,L_{n-1}(z_1,\ldots,z_{n-1}) \right)$ with coordinates
$L_p(z_1,\ldots,z_p)=- b^{p+1}z_p - b^p  \sum \limits_{k=1}^{p-2} L_k(z_1,\ldots,z_{k-1}) B^o_{p,k} \left( z_1,\ldots,z_{p-k+1}\right)$ for $p=1,\ldots,n-1.$  One can see that the mapping $\overrightarrow{L_n}$ is a polynomial automorphism of $\C^{n-1}$ and maps the set $X^{(2)}_n\left( \A_{\varphi,\psi}\right)$ onto the set $X^{(2)}_n\left( \BB_{\varphi,\psi}\right)$.\\
Thus, the polynomial transformation $\overrightarrow{K_n} ({\bf z}):=\overrightarrow{L_n}\circ \overrightarrow{S_n}({\bf z})$ of $\C^{n-1}$ maps $X^{(1)}_{n-1}(\Omega)$ into $X^{(2)}_{n}( \BB_{\varphi,\psi} )$, such that the Taylor coefficients of  $F^{-1} \in \BB_{\varphi,\psi}$ are $b_1=b$ and $b_m=K_m(c_1,\ldots,c_{m-1}),$ $ m\ge2.$ If, in addition, ${\psi'(0)\neq 0}$, then $\overrightarrow{K_n} ({\bf z})$ is automorphism of $\C^{n-1}.$

This implies that other claims of assertion (ii) follow from Proposition~4.2 in \cite{Sug} (cf. the proof of Theorem~4.1 in that work).
\end{proof}

\begin{corol}\label{cor-Phi-FG}
The coefficient body $X_{n}(\BB_{\varphi,\psi})=\left\{\left(0, b\right)\right\} \times X^{(2)}_n( \BB_{\varphi,\psi})$ for $\BB_{\varphi,\psi}$  is a compact set in~$\C^{n+1}$.
\end{corol}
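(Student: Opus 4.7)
The proof is a short consequence of Theorem~\ref{Th-main-b}(i), so the plan is essentially to unpack the definitions and then invoke continuity.

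First I would justify the product structure $X_n(\BB_{\varphi,\psi})=\{(0,b)\}\times X^{(2)}_n(\BB_{\varphi,\psi})$. For any $F\in\A_{\varphi,\psi}$, condition \eqref{cond} gives $F(0)=\varphi(0)\psi(0)=0$ because $\varphi(0)=0$, and $F'(0)=\varphi'(0)\psi(0)=\alpha_1\beta_0=1/b$ (since $\omega(0)=0$ kills the other term in the product rule). Consequently, for every $G=F^{-1}\in\BB_{\varphi,\psi}$ one has $G(0)=0$ and $G'(0)=1/F'(0)=b$. Thus the first two coordinates of every vector in $X_n(\BB_{\varphi,\psi})$ are the fixed values $0$ and $b$, while the remaining coordinates range precisely over $X^{(2)}_n(\BB_{\varphi,\psi})$ by definition \eqref{*Coeff-body}; this gives the asserted Cartesian decomposition.

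Next I would establish compactness of the factor $X^{(2)}_n(\BB_{\varphi,\psi})$. By Theorem~\ref{Th-main-b}(i), the map $\overrightarrow{\Phi}_n$ is a continuous surjection from $\overline{\D^{n-1}}$ onto $X^{(2)}_n(\BB_{\varphi,\psi})$. Since $\overline{\D^{n-1}}$ is compact and continuous images of compact sets are compact, $X^{(2)}_n(\BB_{\varphi,\psi})$ is a compact subset of $\C^{n-1}$.

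Finally, $X_n(\BB_{\varphi,\psi})$ is the Cartesian product of the singleton $\{(0,b)\}$ (trivially compact) and the compact set $X^{(2)}_n(\BB_{\varphi,\psi})$, hence is compact in $\C^{n+1}$. There is no real obstacle here; the only point worth checking carefully is that the first two coefficients of every $G\in\BB_{\varphi,\psi}$ are forced by \eqref{cond} to be $0$ and $b$, so that Theorem~\ref{Th-main-b}(i) really supplies all the non-trivial information needed for compactness.
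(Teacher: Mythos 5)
Your proof is correct and matches the paper's intended argument: the corollary is stated without a written proof precisely because it follows immediately from Theorem~\ref{Th-main-b}(i) (continuous image of the compact polydisk $\overline{\D^{n-1}}$) together with the observation that conditions \eqref{cond} force $G(0)=0$ and $G'(0)=b$ for every $G\in\BB_{\varphi,\psi}$. Nothing is missing.
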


\subsection{Coefficient estimates}\label{CoefB}

A careful look at the proof of Theorem~\ref{Th-main-b} leads to the fact that the last coordinate of the mapping $\overrightarrow{\Phi}_n$ constructed in that theorem  has the form
\[
\Phi_n(\mathbf{z})=H_n(z_1,z_2,\ldots,z_{n-2}) + (1-|z_1|^2)(1-|z_2|^2)\ldots (1-|z_{n-2}|^2)z_{n-1},
\]
where $H_n$ is a (non-analytic) polynomial with respect to its variables and the parameters $(\alpha_1,\ldots,\alpha_n,\beta_0,\ldots,\beta_{n-1})$. Thus
\[
b_n - H_n(\gamma_1,\gamma_2,\ldots,\gamma_{n-2}) = (1-|\gamma_1|^2)(1-|\gamma_2|^2)\ldots (1-|\gamma_{n-2}|^2)\gamma_{n-1},
\]
where $\gamma_1,\gamma_2,\ldots,\gamma_{n-1}$ are the Schur parameters of a function $\omega\in\Omega$. Note that $H_n(\gamma_1,\gamma_2,\ldots,\gamma_{n-2})$ can be expressed through $(b_1,\ldots,b_{n-1})$ explicitly. Then we can obtain estimates on Taylor coefficient and rigidity properties for elements of the class $\BB_{\varphi,\psi}$ by the following algorithm:

\vspace{3mm}

\begin{minipage}{12.5cm}
{\it We already know that $\gamma_0=0$. Assume that $|\gamma_j|<1$ for $j\le n-2$. Then $b_n$ lies in the closed disk centered at $H_n(\gamma_1,\gamma_2,\ldots,\gamma_{n-2})$ and of radius $(1-|\gamma_1|^2)(1-|\gamma_2|^2)\ldots (1-|\gamma_{n-2}|^2)$, where both quantities can be expressed by  $(b_1,\ldots,b_{n-1})$. Moreover, $b_n$ lies on the boundary of the disk if and only if $|\gamma_{n-1}|=1$, that is, $\omega\in\Omega$ is a Blaschke product of order $n-1$. Otherwise,  $|\gamma_{n-1}|<1$ and we then pass to $b_{n+1}$.}
\end{minipage}

\vspace{3mm}

The problem of this approach is that the complicity of the formulas for the disk center and the disk radius quickly increases. Nevertheless, we  demonstrate  how it works for $n=2$ and $n=3$.

\begin{propo}\label{th-b2}
Under the above assumptions the coefficient $b_2$ lies in the closed disk  centered at $c=-\alpha_2\beta_0b^3$ of radius   $r=|\alpha_1\beta_1|\cdot|b|^3$, that is,
\begin{equation}\label{b2-inequality}
\left|b_2+\alpha_2\beta_0b^3 \right| \leq |\alpha_1\beta_1|\cdot|b|^3.
\end{equation}
Consequently, if $\beta_1=0$ then $b_2=   -\alpha_2\beta_0 b^3.$
Otherwise, if $\beta_1\neq0$ but equality in \eqref{b2-inequality} holds, then there exists $\theta \in \R$ such that
\begin{equation*}\label{rigidity11}
F(z) =\varphi(z)\psi(e^{i \theta}z).
\end{equation*}
\end{propo}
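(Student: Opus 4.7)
The plan is to execute the $n=2$ case of the algorithm sketched just before the proposition: read off $a_2$ from Theorem~\ref{Th-main}, use Lemma~\ref{lem-main} to convert $a_2$ into $b_2$, and then apply the Schwarz-type bound $|\gamma_1|\le 1$ on the first Schur parameter of $\omega$. The equality analysis will follow directly from the Schwarz lemma itself.

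First I would evaluate Theorem~\ref{Th-main} at $p=2$. Since $B^{o}_{1,1}(c_1)=c_1$, only the $(m,k)=(1,1)$ term survives, giving
\[
a_2=\alpha_2\beta_0+\alpha_1\beta_1\, c_1,
\]
where $c_1=\omega'(0)=\gamma_1$ by \eqref{w-schur}. Next, for the inverse direction, Lemma~\ref{lem-main} at $n=2$ reads $b_2=-b^2 b_1 B^{o}_{2,1}(a_1,a_2)=-b^3 a_2$ (using $B^{o}_{n,1}(x_1,\dots,x_n)=x_n$ and $b_1=b$). Combining these two identities I obtain the clean expression
\[
b_2+\alpha_2\beta_0\, b^3=-\alpha_1\beta_1\, b^3\, c_1.
\]

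Taking moduli and using $|c_1|=|\gamma_1|\le 1$ yields \eqref{b2-inequality}. The rigidity statements then fall out immediately. If $\beta_1=0$, the right-hand side is identically zero, forcing $b_2=-\alpha_2\beta_0 b^3$ with no extra assumption on $\omega$. If $\beta_1\ne 0$ and equality holds in \eqref{b2-inequality}, then $|c_1|=1$, so by the Schwarz lemma $\omega(z)=e^{i\theta}z$ for some $\theta\in\mathbb R$; plugging back into \eqref{subord} gives $F(z)=\varphi(z)\psi(e^{i\theta}z)$, as claimed.

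There is no genuine obstacle here; the statement is essentially the first nontrivial instantiation of the general algorithm. The only points demanding care are correctly identifying the low-order ordinary Bell polynomials ($B^{o}_{1,1}$ and $B^{o}_{2,1}$) and keeping track of the sign in the inversion formula \eqref{bn}. The apparent non-symmetry between the $\beta_1=0$ and $\beta_1\ne 0$ cases is only superficial: in the former case the identity $b_2=-\alpha_2\beta_0 b^3$ holds automatically because the $\omega$-dependent term in $a_2$ has been killed off.
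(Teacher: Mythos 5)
Your proposal is correct and follows essentially the same route as the paper: both derive $b_2=-b^3(\alpha_2\beta_0+\alpha_1\beta_1\gamma_1)$ from \eqref{ap} and \eqref{bn} (using $c_1=\gamma_1$), bound $|\gamma_1|\le 1$, and invoke the Schwarz Lemma for the equality case. The only cosmetic difference is that you spell out the low-order Bell polynomial evaluations explicitly, which the paper leaves implicit.
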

\begin{proof}
From formulas \eqref{ap} and \eqref{bn} we get
\begin{equation}\label{b2-calc}
b_2=-b^3(\alpha_2\beta_0+\alpha_1\beta_1\gamma_1).
\end{equation}
 Then
 \begin{equation*}\label{b2-disk}
\left|b_2+\alpha_2\beta_0b^3 \right| = |\alpha_1\beta_1|\cdot|b|^3|\gamma_1|.
\end{equation*}
Since $\gamma_1$ is Schur parameter of some $\omega \in \Omega$, then $|\gamma_1|\leq 1$, which implies~\eqref{b2-inequality}.
Equality in \eqref{b2-inequality} is equivalent to  $|\gamma_1|= 1$.
Burning in mind that $|\gamma_1|=|\omega'(0)|$ (see \eqref{w-schur}), we conclude by the Schwarz Lemma that there exists $\theta \in \R$ such that $\omega(z)=e^{i \theta}z$. The proof is complete.
\end{proof}

We now establish the range of $b_3$  and the corresponding rigidity property. For this purpose we denote
$\widetilde{\alpha}:=-\Phi_1(\varphi,2)=2\alpha_2^2-\alpha_1\alpha_3$  and $\widetilde{\beta}:=-\Phi_0(\psi,2)=2\beta_1^2-\beta_0\beta_2$.

\begin{theorem}\label{th-b3-1}
  Let conditions \eqref{cond} are satisfied and $G\in\BB_{\varphi,\psi},\ G(z)=\sum\limits_{k=1}^{\infty}b_kz^k$. The following assertions hold.
\begin{itemize}
  \item [(I)] If $\beta_1=\psi'(0)\neq0, $ then the coefficient  $b_3$ lies in the closed disk $\overline{D_{\rho_1}(c_1)}$, where
\begin{equation}\label{c-b3}
c_1=b^5\left[ \beta_0^2\widetilde{\alpha}+3\alpha_2\beta_1\gamma_1 +  \alpha_1^2 \widetilde{\beta}\gamma_1^2 \right]
\end{equation}
and
\begin{equation}\label{r-b3}
\rho_1= |b|^4 |\alpha_1\beta_1|\left(1-|\gamma_1|^2\right)
\end{equation}
for some $\displaystyle \gamma_1 \in \D.$
  \item [(II)] If $\beta_1=0$, then $b_3$ lies in the closed disk $\overline{D_{\rho_2}(c_2)}$, where
  \begin{equation}\label{b3in-disk}
 c_2=b^5\beta_0^2\widetilde{\alpha} \quad \text{and} \quad \rho_2=|b|^4\left|\alpha_1\beta_2\right|.
  \end{equation}
\end{itemize}
 \end{theorem}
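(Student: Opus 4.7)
The plan is to apply the algorithm from Subsection~\ref{CoefB} in the case $n=3$: compute $b_3$ as a (non-analytic) polynomial in the Schur parameters $\gamma_1,\gamma_2$ of $\omega$, then identify the disk into which $b_3$ falls by fixing $\gamma_1$ and letting $\gamma_2$ vary over $\overline{\D}$. All the ingredients are already in place: the recursion \eqref{bn} from Lemma~\ref{lem-main}, the explicit formula \eqref{ap} from Theorem~\ref{Th-main}, and the Schur relations \eqref{w-schur}.

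To obtain the necessary expression, I would first specialize \eqref{bn} to $n=3$. Using the values $B_{3,1}^o(a_1,a_2,a_3)=a_3$ and $B_{3,2}^o(a_1,a_2)=2a_1a_2$, together with the identity $b_2=-b^3a_2$ from \eqref{b2-calc}, this produces
\[
b_3=-b^4a_3+2b^5a_2^2.
\]
Into this I substitute the expressions for $a_2$ and $a_3$ from \eqref{ap}, thereby introducing the coefficients of $\varphi,\psi$ and the Taylor coefficients $c_1,c_2$ of $\omega$; finally \eqref{w-schur} replaces $c_1$ by $\gamma_1$ and $c_2$ by $(1-|\gamma_1|^2)\gamma_2$.

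The key structural observation is that, in the resulting expression for $b_3$, the parameter $\gamma_2$ appears only linearly and is multiplied by $\alpha_1\beta_1(1-|\gamma_1|^2)$ (times a pure power of $b$). In case (I), $\beta_1\neq0$, so this term is non-trivial and provides the variable piece responsible for the disk structure: since $\gamma_2$ ranges independently over $\overline{\D}$ (by Theorem~\ref{Th-main-b} applied to the Schur parametrization), the coefficient of $\gamma_2$ becomes the radius $\rho_1=|b|^4|\alpha_1\beta_1|(1-|\gamma_1|^2)$, while all remaining terms depend on $\gamma_1$ alone. Grouping these remaining terms and recognizing the combinations $\widetilde{\alpha}=2\alpha_2^2-\alpha_1\alpha_3$ (arising from the $\beta_0^2$ contributions) and $\widetilde{\beta}=2\beta_1^2-\beta_0\beta_2$ (arising from the $\gamma_1^2$ contributions) yields the center \eqref{c-b3}. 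In case (II), $\beta_1=0$ kills both the $\gamma_2$-term and the $\gamma_1$-linear term, leaving
\[
b_3=b^5\beta_0^2\widetilde{\alpha}-b^4\alpha_1\beta_2\gamma_1^2;
\]
since the map $\gamma_1\mapsto\gamma_1^2$ sends $\overline{\D}$ onto $\overline{\D}$, $b_3$ fills exactly the closed disk of radius $|b|^4|\alpha_1\beta_2|$ centered at $b^5\beta_0^2\widetilde{\alpha}$, as claimed in \eqref{b3in-disk}.

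The main obstacle I anticipate is purely computational: keeping track of the various powers of $b$ (using the identity $b\alpha_1\beta_0=1$ to simplify combinations like $b^5\alpha_1\alpha_2\beta_0\beta_1=b^4\alpha_2\beta_1$) and correctly isolating the quantities $\widetilde{\alpha}$ and $\widetilde{\beta}$ among the expanded terms. No conceptual new ingredient beyond the framework of Subsection~\ref{CoefB} is needed; the proof is a careful implementation of the general algorithm at the first genuinely non-trivial level.
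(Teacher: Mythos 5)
Your proposal follows essentially the same route as the paper: both derive the explicit expression $b_3=b^5\bigl(\beta_0^2\widetilde{\alpha}+3\alpha_1\alpha_2\beta_0\beta_1\gamma_1+\alpha_1^2\widetilde{\beta}\gamma_1^2\bigr)-b^4\alpha_1\beta_1(1-|\gamma_1|^2)\gamma_2$ from \eqref{ap}, \eqref{bn} and the Schur relations, and then read off the disk from $|\gamma_2|\le1$ in case (I) and from $|\gamma_1|\le1$ in case (II). Your intermediate identity $b_3=-b^4a_3+2b^5a_2^2$ and the regrouping into $\widetilde{\alpha}$ and $\widetilde{\beta}$ are exactly the computation the paper compresses into its displayed formula, so the argument is correct and matches the paper's proof.
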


Note that the parameter $\gamma_1$ in this theorem is the same one as in formula \eqref{b2-calc}. Thus it connects the ranges of $b_2$ and $b_3$. Moreover, a careful consideration of these ranges leads to the following rigidity result which completes Theorem~\ref{th-b3-1}.

\begin{theorem}\label{th-rigidity-Id}
Suppose that the assumptions of Theorem~\ref{th-b3-1} are satisfied. Let $\rho_1,\lambda_1, \rho_2$  and $\lambda_2$ be defined by formulae \eqref{c-b3}--\eqref{b3in-disk}.

If $\beta_1\neq0, $ then $b_3$ lies on the circle  $\partial D_{\rho_1}(c_1)$ if and only if either
 \begin{itemize}
   \item [(i)] there exists $\xi \in \R$ such that $b_2=-(\alpha_2\beta_0+\alpha_1\beta_1e^{i\xi})b^3$; in this case $b_3=c_1$ and  $F(z)=\varphi(z)\psi(\omega(z))$  with  $\omega(z)= e^{i\xi}z$, or
   \item [(ii)] there exists $\theta \in \R$ such that
$
F(z)=\varphi(z)\psi(\omega(z))$  with  $\omega(z)= z   \frac{\gamma_1+ze^{i \theta}}{1+\overline{\gamma_1}ze^{i \theta}}\,.
$
 \end{itemize}

If $\beta_1=\beta_2=0$ then $b_3=  b^5\beta_0^2\widetilde{\alpha}$. If $\beta_1=0,\ \beta_2\neq0,$ then $b_3$ lies on the circle $\partial D_{\rho_2}(c_2)$ if and only if there exists $\theta \in \R$ such that $G$ is the inverse function to $F\in\A_{\varphi,\psi}$ defined by $F(z)=\varphi(z)\psi(e^{i \theta}z)$
and then $
b_3=\widetilde{\alpha}\beta_0^2 b^5- \alpha_1\beta_2 e^{2i\theta} b^4 .
$
\end{theorem}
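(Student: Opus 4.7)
The plan is to extract from the proof of Theorem~\ref{th-b3-1} the representation of $b_3$ in terms of the Schur parameters $\gamma_1,\gamma_2$ of the inner function $\omega \in \Omega$ associated with $F$, namely
\[
b_3 = c_1 + K\,(1-|\gamma_1|^2)\gamma_2 \qquad (\beta_1 \neq 0)
\]
with $|K| = |b|^4|\alpha_1\beta_1|$, and
\[
b_3 = c_2 + K'\,\gamma_1^2 \qquad (\beta_1 = 0)
\]
with $|K'| = |b|^4|\alpha_1\beta_2|$, for explicit constants $K,K'$ depending only on $\varphi,\psi$ and $b$. The modulus identities $|b_3 - c_1| = \rho_1|\gamma_2|$ and $|b_3 - c_2| = \rho_2|\gamma_1|^2$ then reduce the circle-boundary condition to a Schur-parameter saturation problem.

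For $\beta_1 \neq 0$, boundary equality becomes $(1-|\gamma_1|^2)|\gamma_2| = 1$, which splits into (a) $|\gamma_1|=1$ with $\gamma_2$ arbitrary, or (b) $|\gamma_1|<1$ and $|\gamma_2|=1$. In (a), the identity $|\omega'(0)|=|\gamma_1|=1$ from \eqref{w-schur} combined with the Schwarz Lemma forces $\omega(z)=e^{i\xi}z$ with $\xi=\arg\gamma_1$; substituting into \eqref{b2-calc} gives the asserted formula for $b_2$, and substituting into the displayed representation gives $b_3 = c_1$. In (b), the iterate $\omega_2 = \sigma^2\omega$ is a unimodular constant $e^{i\theta}$ by the rigidity built into Definition~\ref{Sh-Vec}; inverting $\sigma$ twice we recover
\[
\omega_1(z)=\frac{\gamma_1+e^{i\theta}z}{1+\overline{\gamma_1}e^{i\theta}z}, \qquad \omega(z) = z\,\omega_1(z),
\]
which is precisely case (ii). The converses are immediate: substitute the given $\omega$, read off $|\gamma_1|=1$ or $|\gamma_2|=1$ respectively, and apply the same displayed identity.

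For $\beta_1=0$, the identity $b_3 - c_2 = K'\gamma_1^2$ yields $|b_3 - c_2| \leq \rho_2$, with equality (when $\beta_2 \neq 0$) iff $|\gamma_1|=1$, i.e., by the Schwarz Lemma, iff $\omega(z)=e^{i\theta}z$; substituting $\gamma_1 = e^{i\theta}$ into the identity produces the stated value $b_3=\widetilde{\alpha}\beta_0^2 b^5 - \alpha_1\beta_2 e^{2i\theta}b^4$. If in addition $\beta_2=0$, the perturbation term vanishes identically so $b_3=c_2$ for every admissible $\omega$, and no rigidity is imposed, matching the first sentence of the second paragraph of the theorem.

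The main obstacle I anticipate is the bookkeeping in case (b): explicitly inverting the operator $\sigma$ of Definition~\ref{Sh-Vec} to reconstruct $\omega$ from its prescribed Schur parameters, and then confirming that the Schur parameters of the reconstructed Möbius-type $\omega$ are indeed $\gamma_1$ and $e^{i\theta}$, so that the "if" direction genuinely saturates $|b_3 - c_1|=\rho_1$. The remaining steps are direct applications of the Schwarz Lemma together with substitution into the formulas already established in Proposition~\ref{th-b2} and Theorem~\ref{th-b3-1}.
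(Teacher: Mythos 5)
Your proposal is correct and follows essentially the same route as the paper, which proves Theorems~\ref{th-b3-1} and~\ref{th-rigidity-Id} simultaneously by extracting the exact identities $|b_3-c_1|=|b|^4|\alpha_1\beta_1|(1-|\gamma_1|^2)|\gamma_2|$ and $|b_3-c_2|=|b|^4|\alpha_1\beta_2|\,|\gamma_1|^2$ from \eqref{b-form} and then analyzing saturation via the Schwarz Lemma and the inversion of the Schur map \eqref{ShurP}. One small slip to correct: the boundary condition in the case $\beta_1\neq0$ is $(1-|\gamma_1|^2)|\gamma_2|=(1-|\gamma_1|^2)$, not $(1-|\gamma_1|^2)|\gamma_2|=1$, and when $|\gamma_1|=1$ the Schur algorithm forces $\gamma_2=0$ rather than leaving it arbitrary; your dichotomy (a)/(b) and all subsequent conclusions are nevertheless exactly those of the paper.
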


We prove Theorems~\ref{th-b3-1}--\ref{th-rigidity-Id} simultaneously.
\begin{proof}
From formulas \eqref{ap} and \eqref{bn} we get
\begin{equation}\label{b-form}
  b_3=b^5\left(\beta_0^2\widetilde{\alpha}+\alpha_1\beta_1(3\alpha_2\beta_0\gamma_1-\alpha_1\beta_0\gamma_2(1-|\gamma_1|^2))+\alpha_1^2\gamma_1^2\widetilde{\beta} \right).
\end{equation}
So,
 \begin{equation*}
b_3-b^5 \left(\beta_0^2\widetilde{\alpha}+3\alpha_1\beta_0\beta_1\alpha_2\gamma_1+\alpha_1^2\gamma_1^2\widetilde{\beta} \right)   =
 b^5\alpha_1\beta_0\alpha_1\beta_1(1-|\gamma_1|^2)\gamma_2.
 \end{equation*}
Since $\gamma_2$ is the Schur parameter of some $\omega \in \Omega$, then $|\gamma_2|\leq 1$.   This implies
\begin{equation}\label{b3-disk}
\left|  b_3-b^5 \left(\beta_0^2\widetilde{\alpha}+3b\beta_1\alpha_2\gamma_1+\alpha_1^2\gamma_1^2\widetilde{\beta} \right)\right|  \leq | b^4 | |\alpha_1\beta_1|(1-|\gamma_1|^2).
\end{equation}
Thus, $b_3$ lies in the closed disk $\overline{D_{\rho_1}(c_1)}$, which proves assertion (I) of Theorem~\ref{th-b3-1}.
In other words, either $|b_3-c_1|<\rho_1$, or $b_3$ lies on the circle $\partial D_{\rho_1}(c_1)$.
Equality in \eqref{b3-disk} holds if and only if either $|\gamma_1|=1$ or $|\gamma_2|=1$.

It is well known (see \cite{Sim}) that if $|\gamma_1|=1$ then $\gamma_2=0$. In this case there exists $\xi \in \R$ such that $\gamma_1=e^{i \xi}$. Then from \eqref{b2-calc} we see that
$b_2=-(\alpha_2\beta_0+\alpha_1\beta_1e^{i\xi})b^3$ and $b_3=c_1$, so assertion (i) of Theorem~\ref{th-rigidity-Id} holds.

Otherwise, if $|\gamma_1| < 1$, equality in \eqref{b3-disk} is equivalent to $1=|\gamma_2|=|\omega_2(0)|$. Thus by the Schwarz Lemma there exists $\theta \in \R$ such that $\omega_2(z)\equiv\gamma_2=e^{i \theta}$.

Now using \eqref{ShurP} we can express $\omega_1$ and subsequently $\omega$. This gives  $ \omega(z)= z \cdot \frac{\gamma_1+ze^{i \theta}}{1+\overline{\gamma_1}ze^{i \theta}}$. Thus assertion so assertion (ii) of Theorem~\ref{th-rigidity-Id} follows by definition of $F$ in \eqref{subord}.

In the exceptional case $\beta_1=0$, formula \eqref{b-form} get the form
\begin{equation*}\label{b3-beta10}
b_3=    b^5\left(\beta_0^2\widetilde{\alpha}-\alpha_1^2\gamma_1^2\beta_0\beta_2 \right),
\end{equation*}
which implies
\[
 \left|   b_3-    b^5\beta_0^2\widetilde{\alpha} \right|   = \left|\alpha_1\beta_2b^4\right| | \gamma_1|^2.
\]
Since $\gamma_1$ is the Schur parameter of some $\omega \in \Omega$, hence assertion (II) of Theorem~\ref{th-b3-1} holds. This assertion means  that either $b_3$ lies in the open disk $D_{\rho_2}(c_2)$, or \eqref{b3in-disk} becomes equality. The last is possible only if either $\beta_2=0$ or $|\gamma_1|=1$. If $\beta_2=0$, then $ \left| b_3 - b^5\beta_0^2\widetilde{\alpha}  \right|=0$ which means that $b_3=  b^5\beta_0^2\widetilde{\alpha}$.
If $\beta_2 \neq 0$,  equality in \eqref{b3in-disk} holds only when $1=|\gamma_1|=|\omega'(0)|$. Then, by the Schwarz Lemma, there exists $\theta \in \R$ such that $\omega(z)=e^{i \theta}z$. So $F(z)=\varphi(z)\psi(e^{i \theta}z)$. Substituting $\gamma_1=e^{i\theta}$,  we complete the proof of both theorems.
\end{proof}

\medspace

\medspace

\section{Fekete--Szeg\"o functionals}\label{sect-app}
\setcounter{equation}{0}

Here we demonstrate how the above results can be implemented for estimation of Fekete--Szeg\"{o}  functionals over specific well-known classes of functions. We will use the following auxiliary fact that can be verified directly using basic calculus.

\begin{lemma}\label{lem-aux}
Let $A,B,C$ be real constants. Then for each ${x \in [0,1]}$ we have
\begin{eqnarray*}
&&\max_{0\le x\le1} \left\{A x^2+2B x+C\right\}=\\
&&= \left\{
\begin{array}{ccc}
  C-\frac{B^2}{A}, & \text{ if } A+B<0\, \& \, B>0,\\
  \max(A+2B+C,C), & \text{ otherwise,}
\end{array} \vspace{1mm}
\right.\\
&&\leq \max \left\{A+2B+C,B+C,C \right\}.
\end{eqnarray*}
\end{lemma}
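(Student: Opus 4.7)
The plan is to treat $f(x)=Ax^2+2Bx+C$ as a real-valued function on the compact interval $[0,1]$ and determine its maximum by a standard case analysis on the sign of $A$ and the location of the critical point $x^{*}=-B/A$ (when $A\neq0$). First I would dispose of the degenerate case $A=0$, where $f$ is linear and its maximum is $2B+C$ or $C$ according as $B\geq0$ or $B<0$; both values appear in the ``otherwise'' branch of the formula and in the final upper bound.

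Next I would handle the convex case $A>0$: since $f''>0$, the maximum on $[0,1]$ is attained at one of the endpoints, giving $\max(C,A+2B+C)$, which matches the ``otherwise'' branch. The main case is the concave one, $A<0$, where the unconstrained maximum lies at $x^{*}=-B/A$. The condition $0<x^{*}<1$ is equivalent to $-B/A>0$ and $-B/A<1$; with $A<0$ these translate respectively to $B>0$ and $A+B<0$. When this is so, the maximum equals $f(x^{*})=C-B^{2}/A$, giving the first branch of the equality. If instead $A<0$ but either $B\leq0$ or $A+B\geq0$, then $x^{*}\notin(0,1)$ and $f$ is monotone on $[0,1]$, so again the maximum is $\max(C,A+2B+C)$.

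To establish the concluding inequality, I would observe that in every ``otherwise'' case the maximum already lies in $\{C,A+2B+C\}\subset\{A+2B+C,B+C,C\}$. Only the interior-maximum value $C-B^{2}/A$ requires comment: assuming $A<0$, $B>0$, $A+B<0$, the inequality $-B/A<1$ (equivalent to $A+B<0$ under $A<0$) together with $B>0$ yields $-B^{2}/A<B$, hence $C-B^{2}/A<B+C$, which is one of the three terms on the right.

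The calculation is elementary, so I do not expect any real obstacle; the only mildly delicate point is translating the algebraic conditions $x^{*}\in(0,1)$ into the stated inequalities on $B$ and $A+B$ while keeping the inequality directions straight under the sign assumption $A<0$. Once this bookkeeping is in place the result follows immediately.
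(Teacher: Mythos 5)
Your proposal is correct, and it is exactly the ``verified directly using basic calculus'' argument the paper alludes to -- the paper itself gives no proof of Lemma~\ref{lem-aux}. The case analysis on the sign of $A$ and the location of the vertex $-B/A$, together with the observation that $-B/A<1$ and $B>0$ give $C-B^2/A<B+C$, is complete and handles the boundary cases properly.
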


\subsection{Fekete--Szeg\"{o} problem for the case $\mathbf{\psi_\delta(z)=(1-\delta)\frac{1+z}{1-z}+\delta}$}

Let $\delta \in [0,1)$. The function $\psi_\delta$ maps the open unit disk univalently onto the half-plane $\Re w > \delta$.
Clearly, $\beta_0=1$  and  $\beta_j=2(1-\delta)$ for $j>1$. Furthermore, we assume $\alpha_1=1$. In this situation
$a_1=1$ and by formula \eqref{ap},
\begin{equation}\label{ap1}
a_p=\alpha_p +2(1-\delta)\sum_{m=1}^{p-1}  \alpha_{p-m} B_m^o(c_1,\ldots,c_m),\quad p\ge2,
\end{equation}
where the polynomials $B^o_m$ are defined by \eqref{bell-new}. For $F\in\A_{\varphi,\psi_\delta}$ this implies
\[
  \Phi_1(F, \lambda)=\Phi_1(\varphi,\lambda)+2(1-\delta)\left[\alpha_2c_1(1-2\lambda)+c_2+(1-2\lambda(1-\delta))c_1^2\right].
\]
Since for each  $\omega \in \Omega$ we have $|c_2| \leq 1-|c_1|^2$, then
\begin{eqnarray}\label{Phi1a}
 \left|\Phi_1(F, \lambda)\right|&\le& \left|\Phi_1(\varphi,\lambda)\right|+2(1-\delta)+\\
\nonumber +\!\!&2(1-\delta)&\!\!\! \left[|\alpha_2||c_1||1-2\lambda|+ |c_1|^2(|1-2\lambda(1-\delta)|-1)\right]\!.
\end{eqnarray}

\begin{theorem}\label{th-FS_star}
  Let $F\in\A_{\varphi,\psi_\delta}$ for some univalent starlike function $\varphi$. Denote $ \Delta_\delta:=\{\lambda: |1-2\lambda(1-\delta)|+|1-2\lambda |<1\} $.

  Then
  \[
   \left|\Phi_1(F, \lambda)\right|\le \max(1,|4\lambda-3|) +2(1-\delta)\kappa(\lambda),
  \]
where
 \[
\kappa(\lambda)= \left\{ \begin{array}{lcl}
                                                |1-2\lambda(1-\delta)|+2|1-2\lambda|, & \quad & \mbox{if } \  \lambda \notin \Delta_\delta, \vspace{2mm} \\
                1+\frac{|1-2\lambda|^2}{1-|1-2\lambda(1-\delta)|}, & \quad & \mbox{if } \  \lambda \in \Delta_\delta.
                                              \end{array}    \right.
  \]
\end{theorem}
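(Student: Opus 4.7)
The plan is to start from inequality \eqref{Phi1a}, which is already derived in the excerpt, and reduce the estimate to a one-variable maximization in $x:=|c_1|\in[0,1]$ that is resolved by Lemma~\ref{lem-aux}. The only missing analytic ingredients are sharp bounds for $|\Phi_1(\varphi,\lambda)|$ and $|\alpha_2|$, which I would supply by invoking the classical Fekete--Szeg\"o theorem of Keogh--Merkes for univalent starlike functions, namely $|\alpha_3-\lambda\alpha_2^2|\le\max(1,|4\lambda-3|)$ for all complex $\lambda$, together with the familiar bound $|\alpha_2|\le 2$.

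Substituting these into \eqref{Phi1a} produces
\[
|\Phi_1(F,\lambda)|\le \max\bigl(1,|4\lambda-3|\bigr)+2(1-\delta)\bigl(A x^2+2Bx+C\bigr),
\]
where $A=|1-2\lambda(1-\delta)|-1$, $B=|1-2\lambda|$, $C=1$. The decisive observation is that the condition $A+B<0$ appearing in Lemma~\ref{lem-aux} is precisely $|1-2\lambda(1-\delta)|+|1-2\lambda|<1$, which is the definition of $\lambda\in\Delta_\delta$; so the case split in the lemma matches the case split in the statement.

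For $\lambda\notin\Delta_\delta$ I would use the ``otherwise'' branch of Lemma~\ref{lem-aux}: since $A+B\ge 0$ forces $A+2B+C\ge C$, the maximum over $x\in[0,1]$ equals $A+2B+C=|1-2\lambda(1-\delta)|+2|1-2\lambda|$, matching the first branch of $\kappa(\lambda)$. For $\lambda\in\Delta_\delta$ with $B>0$ (that is, $\lambda\neq 1/2$), the first branch of the lemma gives the maximum $C-B^2/A=1+|1-2\lambda|^2/(1-|1-2\lambda(1-\delta)|)$, which is exactly the second branch of $\kappa(\lambda)$. The degenerate value $\lambda=1/2$ (hence $B=0$) forces $A\le 0$, so the quadratic is maximized at $x=0$ with value $C=1$; direct substitution into the formula for $\kappa$ returns the same value, so no separate treatment is needed.

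The only delicate point is the bookkeeping: verifying that $A+B<0$ together with $B>0$ forces the critical point $-B/A$ to lie in $(0,1)$ (it does, since $A<0$ gives $-B/A>0$, and $A+B<0$ gives $-B/A<1$), and checking that the boundary case $\lambda=1/2$ is consistent with the second branch of $\kappa$. Beyond this, the argument is a direct assembly of the preceding ingredients, with no genuine analytic obstacle.
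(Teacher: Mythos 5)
Your proposal is correct and follows essentially the same route as the paper: both start from inequality \eqref{Phi1a}, insert the Keogh--Merkes bound $|\Phi_1(\varphi,\lambda)|\le\max(1,|4\lambda-3|)$ together with $|\alpha_2|\le 2$, and then apply Lemma~\ref{lem-aux} to the quadratic in $|c_1|$ with $A=|1-2\lambda(1-\delta)|-1$, $B=|1-2\lambda|$, $C=1$. The paper simply says ``the result follows by Lemma~\ref{lem-aux}''; your explicit case analysis (including the degenerate point $\lambda=1/2$) fills in exactly the bookkeeping the authors leave implicit.
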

\begin{proof}
By a Theorem~1 in \cite{Ke-Me}, $\left|\Phi_1(\varphi,\lambda)\right|\le \max(1,|4\lambda-3|).$ Further, since $\varphi$ is starlike, $|\alpha_2|\le2$. Thus inequality \eqref{Phi1a} implies
\begin{eqnarray*}
 \left|\Phi_1(F, \lambda)\right|&\le& \max(1,|4\lambda-3|)|+2(1-\delta)+\\
\nonumber +\!\!&2(1-\delta)&\!\!\! \left[2|c_1||1-2\lambda|+ |c_1|^2(|1-2\lambda(1-\delta)|-1)\right]\!.
\end{eqnarray*}
Denote $A=|1-2\lambda(1-\delta)|-1$ and $B=|1-2\lambda|$. Then the result follows by Lemma~\ref{lem-aux}.
\end{proof}

Using Theorem~1 in \cite{Ke-Me} in its complete form, one can easily expand  Theorem~\ref{th-FS_star} for the class
$\A_{\varphi,\psi_\delta}$ where function $\varphi$ is spirallike of arbitrary order.

 Denote by $K_\delta$ the subclass of normalized close-to-convex functions such that
each $f\in K_\delta$ satisfies $\Re\frac{zf'(z)}{\varphi(z)}>\delta$ for a starlike function $\varphi$.
 Thus $F\in\A_{\varphi,\psi_\delta}$ for some starlike $\varphi$  if and only if $F(z)=zf'(z)$ with $f\in K_\delta$. Therefore,
 Theorem~\ref{th-FS_star} immediately imply the following consequence.

 \begin{corol}
   Let $f\in K_\delta$. Then $|\Phi_1(f,\lambda)|\le \max\left(\frac13,|1-\lambda|\right)+\frac{2(1-\delta)}{3}\kappa(\frac{3\lambda}{4}).$
 \end{corol}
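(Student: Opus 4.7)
The plan is to reduce this corollary to a direct application of Theorem~\ref{th-FS_star} via the relation $F(z) = zf'(z)$. First I would note that, by the very definition of $K_\delta$, a normalized function $f(z) = z + \sum_{k \ge 2} f_k z^k$ lies in $K_\delta$ if and only if $F(z) := zf'(z)$ belongs to $\A_{\varphi,\psi_\delta}$ for some normalized starlike $\varphi$ (with $\alpha_1 = 1$, matching the running assumption of Section~\ref{sect-app}). Writing $F(z) = \sum_{k \ge 1} a_k z^k$, comparison of coefficients yields the elementary identity $a_k = k f_k$ for all $k \ge 1$; in particular $a_1 = 1$, $a_2 = 2 f_2$, $a_3 = 3 f_3$.

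Next I would translate the Fekete--Szeg\"o functional across this correspondence. A direct substitution gives
\begin{equation*}
\Phi_1(F,\mu) = a_1 a_3 - \mu a_2^2 = 3 f_3 - 4\mu f_2^2 = 3\bigl(f_3 - \tfrac{4\mu}{3} f_2^2\bigr) = 3\,\Phi_1\bigl(f,\tfrac{4\mu}{3}\bigr).
\end{equation*}
Setting $\mu = 3\lambda/4$ so that $4\mu/3 = \lambda$ gives the clean relation $\Phi_1(f,\lambda) = \tfrac{1}{3}\Phi_1(F, 3\lambda/4)$.

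Finally I would invoke Theorem~\ref{th-FS_star} with the parameter $3\lambda/4$ in place of $\lambda$. That theorem yields
\begin{equation*}
\left|\Phi_1(F,\tfrac{3\lambda}{4})\right| \le \max\bigl(1,\,|3\lambda-3|\bigr) + 2(1-\delta)\,\kappa(\tfrac{3\lambda}{4}).
\end{equation*}
Dividing by $3$ and using $\tfrac{1}{3}\max(1, 3|1-\lambda|) = \max(\tfrac{1}{3}, |1-\lambda|)$ gives precisely the asserted bound. The only things to be careful about are the normalization $f_1 = 1$ (which ensures $a_1 = 1$ as required by the hypothesis $\alpha_1 = 1$ in Theorem~\ref{th-FS_star}) and the bookkeeping in the coefficient identification; there is no real analytical obstacle, since all the work has already been done in Theorem~\ref{th-FS_star}.
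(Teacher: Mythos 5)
Your proposal is correct and follows exactly the route the paper intends: the paper derives this corollary from Theorem~\ref{th-FS_star} via the observation that $f\in K_\delta$ iff $F(z)=zf'(z)\in\A_{\varphi,\psi_\delta}$ for some starlike $\varphi$, which gives $a_k=kf_k$ and hence $\Phi_1(f,\lambda)=\tfrac13\Phi_1(F,3\lambda/4)$. The paper leaves these computations implicit (calling the corollary an immediate consequence), but your spelled-out version is the same argument.
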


In this connection we mention that bounds for the Fekete--Szeg\"o functionals over subclasses of close-to-convex functions
 were studied by many mathematicians (see, for example, \cite{Ke-Me, K-L-S}). In particular, in the last quoted paper the authors
 considered functionals bounds over close-to-convex functions related to convex functions. From this viewpoint the next result seems natural.

\begin{theorem}\label{th-FS_conv}
   If $F\in\A_{\varphi,\psi_0}$ for some univalent convex function $\varphi$, then
 \[
   \left|\Phi_1(F, \lambda)\right|\le \left\{ \begin{array}{lcl}
                                                |\lambda-1| +2 + \frac{|1-2\lambda|^2}{2(1-|1-2\lambda|)}\,,  & \quad & \mbox{if } \  |\lambda-\frac12|<\frac13\,,\vspace{2mm} \\
                                                |\lambda-1|+4|1-2\lambda|, & \quad & \mbox{if } \ |\lambda-\frac12|\ge\frac13\,.
                                              \end{array}    \right.
  \]
\end{theorem}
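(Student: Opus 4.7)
The plan is to follow exactly the blueprint of the proof of Theorem~\ref{th-FS_star}, but replacing the starlike Keogh--Merkes bound by its convex counterpart and exploiting the sharper second-coefficient estimate available for convex $\varphi$. First I would specialize inequality \eqref{Phi1a} to $\delta=0$ to get
\[
|\Phi_1(F,\lambda)|\le |\Phi_1(\varphi,\lambda)|+2+2|\alpha_2|\,|c_1|\,|1-2\lambda|+2|c_1|^2(|1-2\lambda|-1).
\]
Next, for the Fekete--Szeg\"o functional of the convex function $\varphi$, I would invoke the Keogh--Merkes estimate for convex $\varphi$ (stated in~\cite{Ke-Me}), namely $|\Phi_1(\varphi,\lambda)|\le\max\{1/3,|\lambda-1|\}$, together with the standard bound $|\alpha_2|\le 1$ valid for any normalized convex function. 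Substituting the latter gives an upper estimate in which $|c_1|\in[0,1]$ is the only free parameter.

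Now I would set $x=|c_1|$ and regard the right-hand side as the quadratic $Ax^{2}+2Bx+C$ with
\[
A=2(|1-2\lambda|-1),\qquad B=|1-2\lambda|,\qquad C=|\Phi_1(\varphi,\lambda)|+2,
\]
and apply Lemma~\ref{lem-aux}. The distinguishing condition ``$A+B<0$ and $B>0$'' reduces to $3|1-2\lambda|<2$, i.e.\ $|\lambda-\tfrac12|<\tfrac13$; in that regime the lemma returns
\[
C-\frac{B^{2}}{A}=|\Phi_1(\varphi,\lambda)|+2+\frac{|1-2\lambda|^{2}}{2(1-|1-2\lambda|)},
\]
which, after using the Keogh--Merkes bound, produces the first line of the theorem.

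In the complementary range $|\lambda-\tfrac12|\ge\tfrac13$ Lemma~\ref{lem-aux} returns $\max(A+2B+C,\,C)$. A direct computation gives $A+2B+C=|\Phi_1(\varphi,\lambda)|+4|1-2\lambda|$, and since $|1-2\lambda|\ge 2/3$ in this range, one has $4|1-2\lambda|\ge 8/3>2$, so $A+2B+C$ dominates $C$. Combined again with the convex Keogh--Merkes bound this yields the second case. The only real obstacle is purely bookkeeping: keeping track of which of the three candidates of Lemma~\ref{lem-aux} is active in each subregion of $\lambda$, and verifying at the threshold $|\lambda-\tfrac12|=\tfrac13$ that the two formulas agree; once that case analysis is settled the computation is mechanical.
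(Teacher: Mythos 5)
Your overall strategy (reduce to a quadratic via Lemma~\ref{lem-aux}) is in the right family, but the way you decouple the two optimizations creates a genuine gap: the bound you actually derive is strictly weaker than the one claimed in the theorem whenever $|\lambda-1|<\tfrac13$. Concretely, you bound $|\Phi_1(\varphi,\lambda)|$ by $\max\{\tfrac13,|\lambda-1|\}$, whose worst case over convex $\varphi$ is attained at $|\alpha_2|=0$, and, independently, you bound the cross term $2|\alpha_2|\,|c_1|\,|1-2\lambda|$ using $|\alpha_2|\le 1$, whose worst case is at $|\alpha_2|=1$. These two worst cases cannot occur for the same $\varphi$, and adding them gives, in the first regime,
\[
\max\{\tfrac13,|\lambda-1|\}+2+\frac{|1-2\lambda|^2}{2(1-|1-2\lambda|)},
\]
which exceeds the asserted bound by $\tfrac13-|\lambda-1|>0$ when $|\lambda-1|<\tfrac13$ (for instance $\lambda=\tfrac34$ gives $2.58\ldots$ versus the theorem's $2.5$); the same loss occurs in the second regime, e.g.\ at $\lambda=1$ you get $\tfrac{13}{3}$ instead of $4$. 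Your closing step, ``after using the Keogh--Merkes bound, produces the first line,'' silently replaces $\max\{\tfrac13,|\lambda-1|\}$ by $|\lambda-1|$, which is false exactly in that range.

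The paper avoids this by keeping the $|\alpha_2|$-dependence throughout: it invokes the refined form of Corollary~1 in \cite{Ke-Me}, namely $|\Phi_1(\varphi,\lambda)|\le\tfrac13+\bigl(|\lambda-1|-\tfrac13\bigr)|\alpha_2|^2$, and then maximizes the resulting \emph{two}-variable function $h(x,y)=\tfrac73+(a-\tfrac13)y^2+2bxy+2(b-1)x^2$ jointly over $(x,y)=(|c_1|,|\alpha_2|)\in[0,1]^2$, checking the interior critical points and the four edges separately. The coupling through the term $2bxy$ is precisely what makes the joint maximum equal to $2+a+\frac{b^2}{2(1-b)}$ (for $b<\tfrac23$) rather than $2+\max\{\tfrac13,a\}+\frac{b^2}{2(1-b)}$. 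To repair your argument you would have to retain $y=|\alpha_2|$ as a second optimization variable and perform the two-dimensional maximization, as the paper does; a one-variable application of Lemma~\ref{lem-aux} cannot recover the stated constants.
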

\begin{proof}
By Corollary~1 in \cite{Ke-Me}, $\left|\Phi_1(\varphi,\lambda)\right|\le \frac{1}{3}+\left( |\lambda-1| -\frac{1}{3}\right)|\alpha_2|^2 
.$ Thus inequality \eqref{Phi1a} implies
\[
 \left|\Phi_1(F, \lambda)\right|\le \frac{1}{3}+\left( |\lambda-1| -\frac{1}{3} \right)|\alpha_2|^2 +2|\alpha_2||c_1||1-2\lambda|+2+ 2 |c_1|^2 (|1-2\lambda|-1).
\]
Further, since $\varphi$ is convex, $|\alpha_2|\le1$ (see \cite{Pom}). Consider the function
$$h(x,y)=\frac{7}{3}+\left( a-\frac{1}{3} \right)y^2+2bxy+2(b-1)x^2,$$
where we denote $x=|c_1|\in [0,1]$, $y=|\alpha_2| \in [0,1]$ and $a=|\lambda-1|$,  $b=|2\lambda-1|$.

It can be easily verified that if $h_x'(x_0,y_0)=h_y'(x_0,y_0)=0$ at some point $(x_0,y_0)\in(0,1)\times (0,1)$, then  $h(x_0,y_0)\le h(0,0)$, hence $h$ attains its maximum on the rectangle boundary. So we are looking for its maxima on each part of  the boundary by using standard calculus. The result is:
\begin{eqnarray*}
&&
   \max h(0,y) = \left\{ \begin{array}{cc}
                             2+a & \mbox{if \ } a\ge\frac13,\vspace{2mm}  \\
                             \frac73 &  \mbox{if \ } a<\frac13,\vspace{2mm}
                           \end{array}  \right.
   \ \  \max h(x,0) = \left\{ \begin{array}{cc}
                             \frac13 +2b  & \mbox{if \ } b\ge 1,  \vspace{2mm}   \\
                             \frac73 &  \mbox{if \ } b<1,\vspace{2mm}
                           \end{array}  \right.\vspace{3mm}  \\
  &&
      \max h(1,y) = a+4b, \qquad   \max h(x,1) = \left\{ \begin{array}{cc}
                            a+4b  & \mbox{if \ } b\ge\frac23, \vspace{2mm}  \\
                            2+a+\frac{b^2}{2(1-b)}  &  \mbox{if \ } b<\frac23\,.
                           \end{array}  \right.
 \end{eqnarray*}

Now we compare these maxima. Let start with $b<\frac23$. One sees that
\begin{eqnarray*}
&&  \left(2+a+\frac{b^2}{2(1-b)}\right)-(2+a) =\frac{b^2}{2(1-b)}\ge 0, \\
&&  \left(2+a+\frac{b^2}{2(1-b)}\right)-(a+4b)=\frac{(3b-2)^2}{2(1-b)}\ge 0.
\end{eqnarray*}
Further, $\left(2+a+\frac{b^2}{2(1-b)}\right)-\frac73 =\frac{1}{2(1-b)}\cdot\left(2(1-b)\left(a-\frac13\right)+b^2\right).$ It is obviously positive when $a\geq \frac13$. Otherwise,  we have $u:=\Re\lambda\in(\frac23,\frac56). $ Then
\[
2(1-b)\left(a-\frac13\right)+b^2\ge (1-2u)^2-2(2-2u)\left(u-\frac23\right)\ge\frac19\,.
\]
Thus the maximum value of $h$ is $2+a+\frac{b^2}{2(1-b)}$ whenever $b<\frac23.$

In the case where $\frac23 \leq b$ we have
$a+4b\ge\max\left(\frac73,2+a, \frac13+2b \right),$ and the proof is complete.
\end{proof}

Theorem~\ref{th-FS_conv} generalizes Theorem~3 in \cite{Peng} where the case of real $\lambda$ was considered.

\begin{remark}\label{rem-PhiF-PhiG1}
 According to Lemma~\ref{lem-main}, Theorems~\ref{th-FS_star}--\ref{th-FS_conv} imply also estimates for the Fekete--Szeg\"{o} functional over the classes $\BB_{\varphi,\psi_0}$ for $\varphi$ being a starlike (respectively, convex) function. Also $|\Phi_1(G)|=|\Phi_1(F)|$.
\end{remark}

\subsection{Fekete--Szeg\"{o} problem for the case $\mathbf{\varphi(z)=z}$}

In this subsection we concentrate on the case $\varphi=\Id$.  This means that $\alpha_1=1$ and $\alpha_j=0$ for $j>1$. Our aim is to estimate the functionals $\Phi_n(\cdot, \lambda),\ n=1,2,$ over the classes $\A_{\Id,\psi}$ and $\BB_{\Id, \psi}$.

\begin{theorem}\label{Th_FS-func1}
Let $F \in\A_{\Id,\psi}$ and $G=F^{-1} \in \BB_{\Id, \psi}$, where $\psi(z)=\sum\limits_{k=0}^{\infty} \beta_kz^k$. Then for every $\lambda\in\C$
 \begin{equation}\label{almostFekte-a}
 \left| \Phi_1(F,\lambda)\right| \leq \max\left( |\beta_0\beta_1|, | \Phi_0(\psi,\lambda) |    \right)
 \end{equation}
and equality holds for the following functions only:
\begin{itemize}
  \item [(i)] if $\beta_1=0$, then for $F(z)=z\psi(ze^{i\theta})$;
  \item [(ii)] if $\beta_0\beta_2=\lambda\beta_1^2$, then for $F(z)=z\psi(z^2e^{i\theta})$;
  \item [(iii)] otherwise, for $F(z)=z\psi\left(z\frac{\gamma_1+\gamma_2z}{1+\overline{\gamma_1}\gamma_2z}\right)$, where $
  \gamma_2= \frac{\left(\beta_0\beta_2-\lambda \beta_1^2\right)\gamma_1^2|\beta_0\beta_1|}{\beta_0\beta_1\left| \left(\beta_0\beta_2-\lambda \beta_1^2\right)\gamma_1^2 \right|}\,$, $\gamma_1\neq0.$
\end{itemize}
\end{theorem}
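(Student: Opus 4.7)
Specializing Theorem~\ref{Th-main} to $\varphi(z)=z$ (so $\alpha_1=1$ and $\alpha_k=0$ for $k\ge 2$) collapses the recurrence \eqref{ap} to $a_1=\beta_0$ and $a_p=\sum_{k=1}^{p-1}\beta_k B^o_{p-1,k}(c_1,\ldots,c_{p-k})$ for $p\ge 2$, where $\omega(z)=\sum c_k z^k$. The identities $B^o_{n,1}(x)=x_n$ and $B^o_{n,n}(x)=x_1^n$ give $a_2=\beta_1 c_1$ and $a_3=\beta_1 c_2+\beta_2 c_1^2$, whence a direct substitution into the Fekete--Szeg\"o functional yields the clean identity
\[
\Phi_1(F,\lambda)=a_1a_3-\lambda a_2^2=\beta_0\beta_1\, c_2+\Phi_0(\psi,\lambda)\,c_1^2.
\]

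The inequality \eqref{almostFekte-a} would then follow by the triangle inequality together with the classical bound $|c_2|\le 1-|c_1|^2$ (equivalent to $|\gamma_2|\le 1$ for the Schur parameters of $\omega$, cf.\ \eqref{F}), which gives
\[
|\Phi_1(F,\lambda)|\le|\beta_0\beta_1|(1-|c_1|^2)+|\Phi_0(\psi,\lambda)|\,|c_1|^2.
\]
The right-hand side is linear in $t=|c_1|^2\in[0,1]$ and is therefore bounded above by $\max\bigl(|\beta_0\beta_1|,|\Phi_0(\psi,\lambda)|\bigr)$.

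For the equality statement I would trace through the three cases. If $\beta_1=0$, then $|\Phi_1(F,\lambda)|=|\beta_0\beta_2||c_1|^2$, saturated only when $|c_1|=1$, so the Schwarz lemma forces $\omega(z)=e^{i\theta}z$, giving (i). If $\beta_1\ne 0$ and $\Phi_0(\psi,\lambda)=0$, equality requires $c_1=0$ together with $|c_2|=1$, i.e.\ $\gamma_1=0$ and $|\gamma_2|=1$; unwinding \eqref{ShurP} yields $\omega(z)=e^{i\theta}z^2$, which is (ii). In the remaining case, $\beta_1\ne 0$ and $\Phi_0(\psi,\lambda)\ne 0$: equality forces $|\gamma_2|=1$ so that $c_1=\gamma_1$ and $c_2=\gamma_2(1-|\gamma_1|^2)$, and in addition the summands $\beta_0\beta_1 c_2$ and $\Phi_0(\psi,\lambda)c_1^2$ must share a common argument. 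Solving this phase condition for a unimodular $\gamma_2$ produces precisely the expression stated in (iii), and reassembling $\omega$ from its first two Schur parameters via \eqref{ShurP} gives the Blaschke-type formula.

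The delicate part will be the equality analysis in case (iii). Because the upper bound is linear in $t=|\gamma_1|^2$, its maximum is attained at an interior point only in the balanced situation $|\beta_0\beta_1|=|\Phi_0(\psi,\lambda)|$; otherwise the maximum occurs at an endpoint, and the formula of (iii) degenerates — as $|\gamma_1|\to 1$ the extremal $\omega$ collapses to a rotation (recovering (i)) and as $\gamma_1\to 0$ it collapses to $e^{i\theta}z^2$ (recovering (ii)). I expect the endpoint and phase bookkeeping, rather than the inequality itself, to be the only nontrivial ingredient; the actual calculation verifying that the displayed $\gamma_2$ aligns the two phases is a one-liner once the Schur parametrization is in place.
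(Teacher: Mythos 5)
Your proposal follows essentially the same route as the paper's proof: the same identity $\Phi_1(F,\lambda)=\beta_0\beta_1 c_2+\Phi_0(\psi,\lambda)c_1^2$ (the paper writes it in Schur parameters via $c_1=\gamma_1$, $c_2=\gamma_2(1-|\gamma_1|^2)$), the same triangle inequality combined with $|c_2|\le 1-|c_1|^2$ and the linear-in-$|c_1|^2$ bound, and the same three-case equality analysis leading to the rotation, the map $e^{i\theta}z^2$, and the Blaschke-type $\omega$ with $|\gamma_2|=1$. Your closing observation that equality at an interior $|\gamma_1|$ forces the balanced situation $|\beta_0\beta_1|=|\Phi_0(\psi,\lambda)|$ is a point the paper's proof passes over silently, and is worth retaining.
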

\begin{proof}
Since $F(z)=z\psi(\omega(z))$, formula \eqref{ap} in Theorem~\ref{Th-main} get a shorter form
\begin{equation}\label{ap-bet1}
a_p=  \sum_{k=1}^{p-1} \beta_kB_{p-1,k}^0\left(c_1,\ldots,c_{p-k}\right),\quad p\ge2,
\end{equation}
where $c_k$, $k=1,2,\dots,$ are the Taylor coefficients of $\omega \in \Omega$. Using \eqref{ap-bet1} and \eqref{gamma-to-c} we have
\begin{equation}\label{Phi1F}
a_1a_3-\lambda a_2^2= \left(\beta_0\beta_2-\lambda \beta_1^2\right)\gamma_1^2+\beta_0\beta_1\gamma_2\left(1-|\gamma_1|^2\right).
\end{equation}
It now follows by the triangle inequality that
\begin{eqnarray}\label{a3-a22}
  \left|a_1a_3-\lambda a_2^2\right| &\le& \left|\beta_0\beta_2-\lambda \beta_1^2\right| |\gamma_1|^2+|\beta_0\beta_1||\gamma_2|\left(1-|\gamma_1|^2\right) \nonumber \\
   &\le&  \left|\beta_0\beta_2-\lambda \beta_1^2\right| |\gamma_1|^2+|\beta_0\beta_1| \left(1-|\gamma_1|^2\right) ,
\end{eqnarray}
which implies $ \left| a_1a_3-\lambda a_2^2 \right| \leq \max\left( |\beta_0\beta_1|, |\beta_0\beta_2-\lambda \beta_1^2 |    \right)$.

Note that the triangle inequality (that is, the first sign in \eqref{a3-a22}) becomes equality only if either one of the summands is zero, or the arguments of both summands are equal. Thus we have the three cases:

  If  $\beta_1=0,$ then the maximum $\max\left( |\beta_0\beta_1|, |\beta_0\beta_2-\lambda \beta_1^2 |  \right) =|\beta_0\beta_2|$ is attained when $|\gamma_1|=1$ and then $\omega(z)= ze^{i\theta}$ (case (i)).

 If  $\beta_0\beta_2-\lambda \beta_1^2=0,$ then the maximum is equal to $|\beta_0\beta_1|$ and is attained when $\gamma_1=0$ and $|\gamma_2|=1.$ By the same considerations as in the proof of Theorems~\ref{th-b3-1}--\ref{th-rigidity-Id}, we conclude that $\omega(z)= z^2e^{i\theta}$ (case (ii)).

 Otherwise, we need that $\arg\left[\left(\beta_0\beta_2-\lambda \beta_1^2\right)\gamma_1^2\right]= \arg\left[ \beta_0 \beta_1 \gamma_2  \right].$ In other words, $\arg\gamma_2=\arg\left[\left(\beta_0\beta_2-\lambda \beta_1^2\right)\gamma_1^2\right] - \arg\beta_0\beta_1.$ In this case, the second sign in \eqref{a3-a22} becomes equality when $|\gamma_2|=1.$ This leads to case (iii). The proof is complete.
\end{proof}

To proceed we notice that the same formulas \eqref{ap-bet1}  and \eqref{gamma-to-c} give us
\begin{eqnarray}\label{a2a4-a3-2}
\Phi_2(F,\lambda)\!\!&=&\!\!a_2a_4-\lambda a_3^2=\\
\nonumber&=&(\beta_1\beta_3-\lambda \beta_2^2)\gamma_1^4+2(1-\lambda)\beta_1\beta_2\gamma_1^2\gamma_2(1-|\gamma_1|^2)+\\
\nonumber \!\!&+&\!\!\beta_1^2(1-|\gamma_1|^2)\left\{(1-|\gamma_2|^2)\gamma_1\gamma_3- \gamma_2^2\left( |\gamma_1|^2+ \lambda(1-|\gamma_1|^2)\right)\right\}.
\end{eqnarray}

Using this relation one can prove the following assertion.

\begin{theorem}\label{Th_FS-func2}
Let assumptions of Theorem~\ref{Th_FS-func1} hold.
Then for every $\lambda\in\C$ we have
 \begin{equation}\label{almost-Fekte-a3}
|\Phi_2(F,\lambda)|\leq \max\left(\left|\Phi_1(\psi,\lambda) \right|, |1-\lambda||\beta_1\beta_2|+\frac{ C(\beta_1, \lambda) }2\,, C(\beta_1, \lambda))
\right).
 \end{equation}
 where $C(\beta_1, \lambda):=|\beta_1|^2\max(1,|\lambda|).$
\end{theorem}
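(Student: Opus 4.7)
The plan is to proceed from the explicit expression~\eqref{a2a4-a3-2} for $\Phi_2(F,\lambda)$ in terms of the Schur parameters $\gamma_1,\gamma_2,\gamma_3$ of $\omega\in\Omega$, and reduce the estimation to a constrained optimization in two real variables to which Lemma~\ref{lem-aux} can be applied directly. First I would apply the triangle inequality to~\eqref{a2a4-a3-2}, using $|\gamma_3|\le1$ and the bound
\[
\bigl||\gamma_1|^2+\lambda(1-|\gamma_1|^2)\bigr|\le|\gamma_1|^2+|\lambda|(1-|\gamma_1|^2)\le\max(1,|\lambda|)=:M,
\]
which comes from viewing the left-hand side as a convex combination of $1$ and $|\lambda|$. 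Setting $r=|\gamma_1|,\ s=|\gamma_2|\in[0,1]$ and writing $A=|\Phi_1(\psi,\lambda)|$, $B=|1-\lambda||\beta_1\beta_2|$, $D=|\beta_1|^2$, this produces the two-variable majorant
\[
|\Phi_2(F,\lambda)|\le f(r,s):=Ar^4+Dr(1-r^2)+2Br^2(1-r^2)s+D(1-r^2)(M-r)s^2.
\]

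Next I would eliminate the variable $s$. Since $M\ge 1\ge r$ and $B,D\ge0$, both the coefficient of $s$ and the coefficient of $s^2$ in $f(r,s)$ are non-negative, so $f(r,\cdot)$ is non-decreasing on $[0,1]$. Consequently
\[
\max_{s\in[0,1]}f(r,s)=f(r,1)=Ar^4+2Br^2(1-r^2)+DM(1-r^2),
\]
and after the change of variable $u=r^2\in[0,1]$ this becomes the quadratic
\[
g(u):=(A-2B)u^2+(2B-DM)u+DM.
\]

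Finally, I would invoke Lemma~\ref{lem-aux} with the parameters $A'=A-2B$, $2B'=2B-DM$, $C'=DM$, for which one checks
\[
A'+2B'+C'=A,\qquad B'+C'=B+\tfrac{DM}{2},\qquad C'=DM=C(\beta_1,\lambda).
\]
The lemma then yields
\[
\max_{u\in[0,1]}g(u)\le\max\!\Bigl(A,\ B+\tfrac{C(\beta_1,\lambda)}{2},\ C(\beta_1,\lambda)\Bigr),
\]
which is exactly \eqref{almost-Fekte-a3}. The only real obstacle is a bookkeeping one: one must verify carefully that the triangle inequality losses are all absorbed into the three candidate maxima, in particular that the intermediate bound $r^2+|\lambda|(1-r^2)\le M$ (rather than the sharper $|r^2+\lambda(1-r^2)|$) is consistent with the cases in Lemma~\ref{lem-aux}; but since each sign of $A-2B$ and each location of the vertex of $g$ is handled uniformly by the lemma, no case-by-case discussion is required beyond what the lemma already encodes.
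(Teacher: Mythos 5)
Your argument is correct and follows essentially the same route as the paper's proof: both start from the expansion \eqref{a2a4-a3-2}, use the triangle inequality together with $|\gamma_3|\le1$ and the convex-combination bound $\bigl||\gamma_1|^2+\lambda(1-|\gamma_1|^2)\bigr|\le\max(1,|\lambda|)$, and reduce to the same quadratic in $|\gamma_1|^2$ handled by Lemma~\ref{lem-aux}. The only difference is cosmetic: you keep $s=|\gamma_2|$ explicit and argue monotonicity before setting $s=1$, whereas the paper absorbs $|\gamma_2|\le1$ directly in the term-by-term estimate.
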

\begin{proof}
  It follows by formula~\eqref{a2a4-a3-2} that $\Phi_2(F,\lambda)$ is not greater than
  \begin{eqnarray*}
 \left| \Phi_1(\psi,\lambda)\right||\gamma_1|^4+2|(1-\lambda)\beta_1\beta_2||\gamma_1|^2(1-|\gamma_1|^2)\\
   + |\beta_1|^2\max\{1,|\lambda|\} (1-|\gamma_1|^2).
\end{eqnarray*}
Denote $C= C(\beta_1, \lambda),\ B=\frac12(2|(1-\lambda)\beta_1\beta_2|-  |\beta_1|^2\max\{1,|\lambda|\})$ and $A= \left| \Phi_1(\psi,\lambda)\right| -2|(1-\lambda)\beta_1\beta_2| $. Then concerning the quadratical polynomial $Ax^2+2Bx+C$ with $x=|\gamma_1|^2$, we get our assertion by Lemma~\ref{lem-aux}.
\end{proof}

\begin{remark}\label{rem-PhiF-PhiG12}
 According to Lemma~\ref{lem-main}, Theorems~\ref{Th_FS-func1}--\ref{Th_FS-func2} imply also estimates for the Fekete--Szeg\"{o} functionals over the classes $\BB_{\Id,\psi}$. Namely,
 \begin{eqnarray}
 \nonumber |\Phi_1(G,\lambda)| & \leq & \frac{1}{|\beta_0|^6} \max \left( |\beta_0\beta_1|, | \Phi_0(\psi,2-\lambda) |   \right); \\
\nonumber |\Phi_2(G,\lambda)|&\leq& \frac{1}{|\beta_0|^8}\cdot|\Phi_2(F,\lambda)|+\frac{|4\lambda-5||\beta_1|^2}{|\beta_0|^{10}}\cdot \max\left( |\beta_0\beta_1|, |\Phi_0(\psi) |    \right).
 \end{eqnarray}
The first estimate is sharp with equality sign only if either one of the conditions (i)--(iii) of Theorem~\ref{Th_FS-func1} holds, while the second one can be improved using Lemma~\ref{lem-main} and formula \eqref{a2a4-a3-2}.
\end{remark}

\begin{theorem}\label{th-Phi2-G}
Let assumptions of Theorem~\ref{Th_FS-func1} hold.
Then for every $\lambda\in\C$ we have
\begin{equation}\label{Phi2G-estim}
 |\Phi_2(G,\lambda)|\leq \left|\frac{\beta_1^2}{\beta_0^8}\right| \cdot \max \left\{C,D,E \right\},
\end{equation}
where $C= \max(1,|\lambda|), \  D = \left| \displaystyle \frac{\Phi_1(\psi,\lambda)}{\beta_1^2}+(4\lambda-5)\frac{\Phi_0(\psi)}{\beta_0^2}\right| $ and \linebreak $E=\displaystyle \frac{1}{2}|4\lambda-5|\left| \frac{\beta_1}{\beta_0}\right| +|1-\lambda|\left|\frac{\beta_2}{\beta_1}\right|+\frac{C}{2}\,.
$
\end{theorem}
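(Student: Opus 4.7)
The plan is to combine Lemma~\ref{lem-main} with the Schur-parameter expressions \eqref{Phi1F} and \eqref{a2a4-a3-2} already established, and then reduce the resulting estimate to a two-variable optimization on $[0,1]^{2}$ that Lemma~\ref{lem-aux} can handle. Since $\varphi=\Id$, Theorem~\ref{Th-main} gives $a_1=\beta_0$ and $a_2=\beta_1\gamma_1$, so $b=1/\beta_0$ and Lemma~\ref{lem-main} becomes
\[
\beta_0^{8}\,\Phi_2(G,\lambda)=\Phi_2(F,\lambda)+(4\lambda-5)\,\frac{\beta_1^{2}\gamma_1^{2}}{\beta_0^{2}}\,\Phi_1(F).
\]
I plug in \eqref{Phi1F} (at $\lambda=1$) together with \eqref{a2a4-a3-2}, and regroup the result according to the monomials $\gamma_1^{4}$, $\gamma_1^{2}\gamma_2(1-|\gamma_1|^{2})$, and the $(1-|\gamma_1|^{2})$-remainder. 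The coefficient of $\gamma_1^{4}$ equals $\beta_1^{2}\bigl[\Phi_1(\psi,\lambda)/\beta_1^{2}+(4\lambda-5)\Phi_0(\psi)/\beta_0^{2}\bigr]$, so its modulus is $|\beta_1|^{2}D$; the coefficient of $\gamma_1^{2}\gamma_2(1-|\gamma_1|^{2})$ is $2(1-\lambda)\beta_1\beta_2+(4\lambda-5)\beta_1^{3}/\beta_0$, and the triangle inequality bounds its modulus by $|\beta_1|^{2}(2E-C)$; the remaining term $\beta_1^{2}(1-|\gamma_1|^{2})\bigl[(1-|\gamma_2|^{2})\gamma_1\gamma_3-\gamma_2^{2}(|\gamma_1|^{2}+\lambda(1-|\gamma_1|^{2}))\bigr]$ is handled using $|\gamma_3|\le1$ and the convex-combination estimate $\bigl||\gamma_1|^{2}+\lambda(1-|\gamma_1|^{2})\bigr|\le C$, which is valid because $\alpha\cdot1+(1-\alpha)\lambda$ is a convex combination of $1$ and $\lambda$.

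Writing $x=|\gamma_1|$ and $y=|\gamma_2|$, these three bounds together give
\[
\frac{|\beta_0|^{8}}{|\beta_1|^{2}}\,|\Phi_2(G,\lambda)|\le Dx^{4}+(2E-C)x^{2}y(1-x^{2})+(1-x^{2})\bigl[x(1-y^{2})+Cy^{2}\bigr].
\]
For fixed $x$ the right-hand side is a quadratic in $y$ with leading coefficient $(1-x^{2})(C-x)\ge0$ (this uses $C=\max(1,|\lambda|)\ge1\ge x$), hence its maximum on $[0,1]$ is attained at $y=0$ or $y=1$. At $y=1$, setting $t=x^{2}\in[0,1]$ produces the quadratic $(D+C-2E)t^{2}+2(E-C)t+C$; Lemma~\ref{lem-aux} then delivers precisely the bound $\max(D,E,C)$, since its three candidate values reduce to $D$, $E$, and $C$ respectively.

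The main obstacle is the remaining case $y=0$, where the bound collapses to $Dx^{4}+x(1-x^{2})$ and Lemma~\ref{lem-aux} does not apply directly. I dispose of it by elementary calculus: if $D\ge1$ the factorization $D-(Dx^{4}+x-x^{3})=(1-x^{2})\bigl(D(1+x^{2})-x\bigr)\ge0$ proves the bound is at most $D$; if $D<1$, then $Dx^{4}\le x^{4}$, and the majorant $x^{4}+x-x^{3}$ has derivative $1-3x^{2}+4x^{3}$ which remains positive on $[0,1]$ (its minimum at $x=1/2$ equals $3/4>0$), so this majorant is increasing and bounded by its value $1$ at $x=1$. Therefore $Dx^{4}+x(1-x^{2})\le\max(D,1)\le\max(D,C)$, and combining with the $y=1$ case yields \eqref{Phi2G-estim}.
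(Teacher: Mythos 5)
Your proof is correct and follows essentially the same route as the paper's: combine Lemma~\ref{lem-main} with \eqref{Phi1F} and \eqref{a2a4-a3-2}, group by the monomials $\gamma_1^4$, $\gamma_1^2\gamma_2(1-|\gamma_1|^2)$ and the remainder, and finish with Lemma~\ref{lem-aux}. The only deviation is that the paper bounds $|\gamma_1\gamma_3|\le 1$, so the whole last brace is at most $C$ and the problem collapses at once to a single quadratic in $|\gamma_1|^2$; your sharper intermediate bound $|\gamma_1\gamma_3|\le|\gamma_1|$ creates the two-variable optimization and the extra $y=0$ case, which you dispose of correctly but which is avoidable.
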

\begin{proof}
By Lemma~\ref{lem-main} we have $ \Phi_2(G,\lambda)=b^{8}\Phi_2(F,\lambda)+(4\lambda-5)b^{10}a_2^2\Phi_1(F).$
Now using   formulas \eqref{Phi1F} and \eqref{a2a4-a3-2} and putting ${x:=|\gamma_1|^2 \in [0,1]}$, we estimate $ \Phi_2(G,\lambda)$ on the way similar to the proof of Theorem~\ref{Th_FS-func2}. Then we conclude
\begin{equation*}\label{Phi2G-estim}
\left|\frac{\beta_0^8}{\beta_1^2}\right| |\Phi_2(G,\lambda)|\leq \max_{0\le x\le1} \left\{A x^2+2B x+C\right\},
\end{equation*}
where $ A = \left| \frac{\Phi_1(\psi,\lambda)}{\beta_1^2}+(4\lambda-5)\frac{\Phi_0(\psi)}{\beta_0^2}\right|- |4\lambda-5|\left|\frac{\beta_1}{\beta_0}\right|-2|1-\lambda|\left|\frac{\beta_2}{\beta_1}\right| ,$ \\ $B=\frac{1}{2}|4\lambda-5|\left| \frac{\beta_1}{\beta_0}\right| +|1-\lambda|\left|\frac{\beta_2}{\beta_1}\right|-\frac{1}{2}\max(1,|\lambda|)$ and $ C= \max(1,|\lambda|).$ Denoting $D=B+C$ and $E=A+2B+C$,  we result.
\end{proof}

One can put attention that Theorem~\ref{Th_FS-func2} provides no rigidity result. At the same time, we can do it in some particular cases. In the following example we demonstrate the case where first coefficients of $\psi$ are equal one to another.
\begin{examp}\label{ex1}
 Let $F \in\A_{\Id,\psi}$ and $G=F^{-1} \in \BB_{\Id, \psi}$, where $\psi(z)=\beta_0 + \beta(z+z^2+z^3)+ \sum\limits_{k=4}^{\infty} \beta_kz^k,\  \beta \neq 0$.
Then the Taylor coefficients of the function $G=F^{-1},\ G(z)=z+b_2z^2+b_3z^3+\ldots,$ satisfy the following:
\begin{itemize}
  \item [(1)] By Proposition~\ref{th-b2}, $\left|b_2 \right| \leq \frac{|\beta|}{|\beta_0|^3}$ with equality only for $F(z)=z\psi(e^{i \theta }z)$ with some $\theta \in \R$.

  \item [(2)]  By Theorem~\ref{th-rigidity-Id} and its proof,
  $\left|b_3\right| \leq \frac{|\beta|}{|\beta_0|^5} \max \{|\beta_0|,|2\beta-\beta_0| \}.$
 Moreover, if $\Re \frac{\beta_0}{\beta}\ge1 $,  equality holds only for $F(z)=z\psi(z^2e^{i \xi})$ with some $\xi \in \R$.
Otherwise, equality holds only for $F(z)=z\psi(e^{i \theta }z)$  with some $\theta \in \R$.
\item [(3)]  $\left|\Phi_1(F,\lambda)\right|\le |\beta| \max (|\beta_0|, |\beta_0-\lambda\beta|)$ and $\left|\Phi_1(G,\lambda)\right|\le \frac{1}{\beta_0^6} \left|\Phi_1(F,2-\lambda)\right|$. By Theorem~\ref{Th_FS-func1}, equalities in both estimates hold only if either $\lambda\beta=1$ for $F(z)=z\psi(z^2e^{i\theta})$, or for $F(z)=z\psi\left(z\frac{\gamma+\mu z}{1+\overline{\gamma}\mu z}\right)$, where $\mu= \frac{\left(\beta_0- \lambda \beta\right)\gamma^2|\beta_0|}{\beta_0\left| \left(\beta_0-\lambda \beta\right)\gamma^2 \right|}\,,\ |\gamma|\in(0,1)$.
\item[(4)] By formula \eqref{a2a4-a3-2} and Theorem~\ref{th-Phi2-G} (see also Theorem~\ref{Th_FS-func2}) the following estimates hold
\begin{equation}\label{estimPhi2F-lam0}
\left|\Phi_2(F,\lambda)\right|\leq |\beta|^2\max\left( |\lambda|, |1-\lambda|, 1\right)
\end{equation}
and
\begin{equation*}
 |\Phi_2(G,\lambda)|\leq\left|\frac{\beta^2}{\beta_0^8}\right| \cdot
 \max \left\{ 1,|\lambda|, D,
  \frac{|\beta||4\lambda-5|}{2|\beta_0|}+|1-\lambda|-\frac{\max(1,|\lambda|)}{2} 
 \right\}
\end{equation*}
with $D=\left|\frac{\Phi_1(\psi,\lambda)}{\beta^2}+\frac{(4\lambda-5)\Phi_0(\psi)}{\beta_0^2}\right|$.\\
Moreover,
\begin{itemize}
  \item [(i)] in the case where $\Re \lambda \leq \frac{1}{2}$, equality in \eqref{estimPhi2F-lam0} holds only for $F(z)=z\psi(ze^{i\theta})$;

  \item [(ii)] in the case where $\Re \lambda \geq \frac{1}{2}$, equality in \eqref{estimPhi2F-lam0} holds only for $F(z)=z\psi(z^2e^{i\theta})$.
\end{itemize}
\end{itemize}
\end{examp}

Another application of the above result is: for a given domain $\widetilde D$ containing the image of the quotient  $\frac{F(z)}z$, let $\psi$ be a conformal mapping of the open unit disk $\D$ onto $\widetilde D$, so that $F\in\A_{\Id,\psi}$. Then we can provide ranges for the Taylor coefficients and the Fekete--Szeg\"{o} functionals for inverse functions $G=F^{-1}\in \BB_{\Id,\psi}$.

\begin{examp}\label{ex2}
 Consider all functions $F\in\Hol(\D,\C)$ such that $F(0)=0$,    $F'(0)=1$  and  $\frac{F(z)}z$ does not takes real  negative values. Then
 \[
 \frac{F(z)}z\prec\left(\frac{1+z}{1-z} \right)^2=1+\sum_{n=1}^\infty 4nz^n.
 \]
 Therefore the Taylor coefficients of the function $G=F^{-1},\ G(z)=z+b_2z^2+b_3z^3+\ldots,$ satisfy
\begin{itemize}
  \item [(1)] by Proposition~\ref{th-b2}, $\left|b_2 \right| \leq 4$ and equality holds only when $F(z)=z\left(\frac{1+e^{i \theta }z}{1-e^{i \theta}z}\right)^2$ for some $\theta \in \R$.
  \item [(2)] $\left|b_3-\frac{3}{2}b_2^2 \right| \leq \frac{16-|b_2|^2}{4} $; equality holds only when either $F(z)=z\left(\frac{1+e^{i \theta}z}{1-e^{i \theta}z}\right)^2$ for some $\theta \in \R$, or
      $F(z)=z\left(\frac{1+\gamma z +(z+\overline{\gamma})e^{i \zeta}z}{1-\gamma z -(z-\overline{\gamma})ze^{i \zeta}z}\right)^2$ for some $\gamma \in \D$ and $\zeta \in \R$       by Theorem~\ref{th-rigidity-Id}. Consequently, $\left|b_3 \right| \leq 24$ and equality holds only when $F(z)=z\left(\frac{1+e^{i \theta }z}{1-e^{i \theta}z}\right)^2$ for some $\theta \in \R$.
\end{itemize}

In addition, by Remark~\ref{rem-PhiF-PhiG12}, we have $\left|\Phi_1(G,\lambda)\right|\le 8\max \left(\frac12,|2\lambda-3|\right)$ and by Theorem~\ref{th-Phi2-G},   $|\Phi_2(G,\lambda)|\leq 16 \max \left\{C,D,E \right\},$
where $C= \max(1,|\lambda|),$ $D =\left|43-36\lambda\right|$  and  $E=2|4\lambda-5|  + 2|1-\lambda| +\frac{C}{2}\,.$
\end{examp}

\end{document}